\newcommand{\excise}[1]{}%{$\star$\textsc{#1}$\star$}
\newtheorem{thm}{Theorem}[section]
\newtheorem{lemma}[thm]{Lemma}
\newtheorem{prop}[thm]{Proposition}
\newtheorem{conj}[thm]{Conjecture}
\newtheorem{question}[thm]{Question}
\theoremstyle{definition}
\newtheorem{example}[thm]{Example}
\newtheorem{remark}[thm]{Remark}
\newtheorem{defn}[thm]{Definition}
\newtheorem{notation}[thm]{Notation}
\numberwithin{equation}{section}
\renewcommand\>{\rangle}
\newcommand\<{\langle}
\newcommand\QQ{\mathbb{Q}}
\newcommand\ZZ{\mathbb{Z}}
\newcommand\til{\mathord\sim}
\DeclareMathOperator\Betti{Betti} % Betti
\DeclareMathOperator\Ap{Ap} % Apery set
\DeclareMathOperator\supp{supp} % support
\begin{document}
%!TEX root = minprescard.tex

\mbox{}
%\vspace{-2ex}%-1.1743pt}
\title[On the cardinality of minimal presentations]{On the cardinality of minimal presentations \\ of numerical semigroups}

\author[Elmacioglu]{Ceyhun Elmacioglu}
\address{Mathematics Department\\Lafayette College\\Easton, PA 18042}
\email{elmacioglu.ceyhun@gmail.com}

\author[Hilmer]{Kieran Hilmer}
\address{Mathematics Department\\Purdue University\\West Lafayette, IN 47907}
\email{hilmerk@purdue.edu}

\author[O'Neill]{Christopher O'Neill}
\address{Mathematics Department\\San Diego State University\\San Diego, CA 92182}
\email{cdoneill@sdsu.edu}

\author[Okandan]{Melin Okandan}
\address{Mathematics Department\\Koç University\\ Istanbul, Turkey}
\email{melinokandan98@gmail.com}

\author[Park-Kaufmann]{Hannah Park-Kaufmann}
\address{Mathematics Department and Conservatory\\Bard College\\Annandale-on-Hudson, NY 12504}
\email{parkkaufmann@gmail.com}

\date{\today}

\begin{abstract}
In this paper, we consider the following question:\ ``given the multiplicity $m$ and embedding dimension $e$ of a numerical semigroup $S$, what can be said about the cardinality $\eta$ of a minimal presentation of $S$?''  We approach this question from a combinatorial (poset-theoretic) perspective, utilizing the recently-introduced notion of a Kunz nilsemigroup.  In addition to making significant headway on this question beyond what was previously known, in the form of both explicit constructions and general bounds, we provide a self-contained introduction to Kunz nilsemigroups that avoids the polyhedral geometry necessary for much of their source material.  
\end{abstract}

\maketitle

% \setcounter{tocdepth}{1}
% \tableofcontents

% TODO FIX:
% Prop/Cor looks too much like Prob/Conj in Section 8
% \NN vs \ZZ_{\ge 0}
% Fix \Ap(S_2) in Example 2.1
% $e \le 6$ should be $e \le 7$ in the intro

%%%%%%%%%%%%%%%%%%%%%%%%%%%%%%%%%%%%%%%%%%%%%%%%%%%%%%%%%%%%%%%%%%%%%%%%%
\section{Introduction}%%%%%%%%%%%%%%%%%%%%%%%%%%%%%%%%%%%%%%%%%%%%%%%%%%%
\label{sec:intro}
%!TEX root = minprescard.tex

A \emph{numerical semigroup} is a cofinite subset $S \subseteq \ZZ_{\ge 0}$ that is closed under addition and contains $0$.  We often specify a numerical semigroup using generators $n_1 < \cdots < n_k$,~i.e.,
\[
S = \<n_1, \ldots, n_k\> = \{z_1 n_1 + \cdots + z_k n_k : z_i \in \ZZ_{\ge 0}\}.
\]
It is known that each numerical semigroup $S$ has a unique minimal generating set, the elements of which are called \emph{atoms}; we write $\mathsf e(S) = k$ for its cardinality and $\mathsf m(S) = n_1$ for its smallest element, called the \emph{embedding dimension} and \emph{multiplicity} of $S$, respectively.  A \emph{factorization} of an element $n \in S$ is an expression 
$$n = z_1 n_1 + \cdots + z_k n_k$$
of $n$ as a sum of generators of $S$, which we often encode as a $k$-tuple $(z_1, \ldots, z_k)$.  

One of the primary ways of studying a numerical semigroup $S$ is via a minimal presentation $\rho \subset \ZZ_{\ge 0}^k \times \ZZ_{\ge 0}^k$, each element of which is a pair of factorizations that represents a minimal \emph{relation} or \emph{trade} between the generators of $S$ (we save the formal definition for Section~\ref{sec:overview}).  For example, if $S = \<6,9,20\>$, then 
$$\rho = \{((3,0,0), (0,2,0)), \, ((4,4,0), (0,0,3))\}$$
is a minimal presentation of $S$ consisting of 2 trades, the first between the factorizations $18 = 3 \cdot 6 = 2 \cdot 9$, and the second between the factorizations $60 = 4 \cdot 6 + 4 \cdot 7 = 3 \cdot 20$.  While a given numerical semigroup $S$ can have numerous minimal presentations, all have identical cardinality~\cite{fingenmon}; we denote this value by $\eta(S)$.  

\begin{question}\label{q:mainquestion}
Given the multiplicity $\mathsf m(S)$ and embedding dimension $\mathsf e(S)$ of a numerical semigroup $S$, what are the attainable minimal presentation cardinalities $\eta(S)$?  
\end{question}

Given a numerical semigroup $S$, some bounds are known for $\eta = \eta(S)$ in terms of $m = \mathsf m(S)$ and $e = \mathsf e(S)$.  It is known that $e - 1 \le \eta$, with equality if and only if $S$ is complete intersection~\cite{completeintersection}.  On the other hand, no upper bound for $\eta$ in terms of $e$ is possible in general.  Indeed, if $e = 2$, then $\eta = 1$, and if $e = 3$, then $\eta = 2$ when $S$ is complete intersection and $\eta = 3$ otherwise.  However, when $e = 4$ or larger, the value of $\eta$ can be arbitrarily large~\cite{primeidealsgenericzero}; see~\cite{monomialcurvecmtype,genericlatticecodim3} for families achieving these values, as well as~\cite{nsbettisurvey} for a survey of such results.  

More is known if one considers the value of $m$.  It is well known that $\eta \le \binom{m}{2}$, with equality if and only if $e = m$, in which case we say $S$ has \emph{max embedding dimension} (see \cite[Section~8.4]{numerical}).  
% In particular, once $m$ is fixed, there are only finitely many values $\eta$ can achieve.  
Some extensions of this are given in~\cite{highembdim}:\ if $e = m - 1$, then $\eta \in [\binom{e}{2} - 1, \binom{e}{2}]$, and if~$e = m - 2$, then $\eta \in [\binom{e}{2} - 2, \binom{e}{2}]$.  Additionally, if $3 \le e$, then $\eta = \binom{e}{2}$ is attained for~every $m \ge e$ by~\cite{rosalesApery}.  
% These results allow for a complete list of attainable values of $\eta$ for each $m \le 6$.  

In regards to Question~\ref{q:mainquestion}, the authors of~\cite{highembdim} note that their aforementioned results for $e \in [m-2, m]$ fail to extend to $e = m-3$, and remark this ``makes one think of alternative ways of study for numerical semigroups with not so high embedding dimension''.  
Recent work~\cite{kunzfaces3} has done just that, uncovering a new way to approach Question~\ref{q:mainquestion} that is poset-theoretic in nature.  The idea is to associate to each numerical semigroup $S$ a finite, partly cancellative nilsemigroup $N$, called the Kunz nilsemigroup of $S$, from whose divisibility poset the value $\eta(S)$ can be recovered.  

One of the primary difficulties in classifying $\eta(S)$ is that, except for a handful of specific families of numerical semigroups, minimal presentations can vary widely in structure.  Proving that a given set of trades is a minimal presentation usually involves a highly technical argument, and often requires a strong description of how one can navigate the factorizations of every element of $S$ using the given trades.  That is what makes Kunz nilsemigroups so advantageous for this task:\ the value of $\eta(S)$ can be obtained without obtaining a full minimal presentation of $S$.  
In fact, upon re-examining~\cite{rosalesApery,highembdim}, one can see the arguments and constructions therein as special cases of those we develop in Sections~\ref{sec:lowerbound} and~\ref{sec:intervalfamily}, albeit with much lengthier arguments and in less generality.  

The purposes of the present manuscript are twofold.  
\begin{itemize}
\item
In Section~\ref{sec:overview}, we provide a self-contained introduction to the machinery introduced in~\cite{kunzfaces3}, illustrating how it can be used to approach Question~\ref{q:mainquestion}.  Although the manuscript is the third in a sequence of geometry-centric papers~\cite{kunzfaces1,kunzfaces2,kunzfaces3}, the combinatorial methods for obtaining $\eta(S)$ do not actually rely on the geometry, and our overview of these methods in Section~\ref{sec:overview} is careful to avoid it.  

\item 
In the remaining sections, we utilize this new machinery to make considerable headway on Question~\ref{q:mainquestion}.  We present several families of numerical semigroups achieving a large range of values of $\eta$ for each $e$ and $m$, as well as bounds on the possible values of $\eta$.  

\end{itemize}

\begin{figure}[t]
\begin{center}
\includegraphics[width=5.0in]{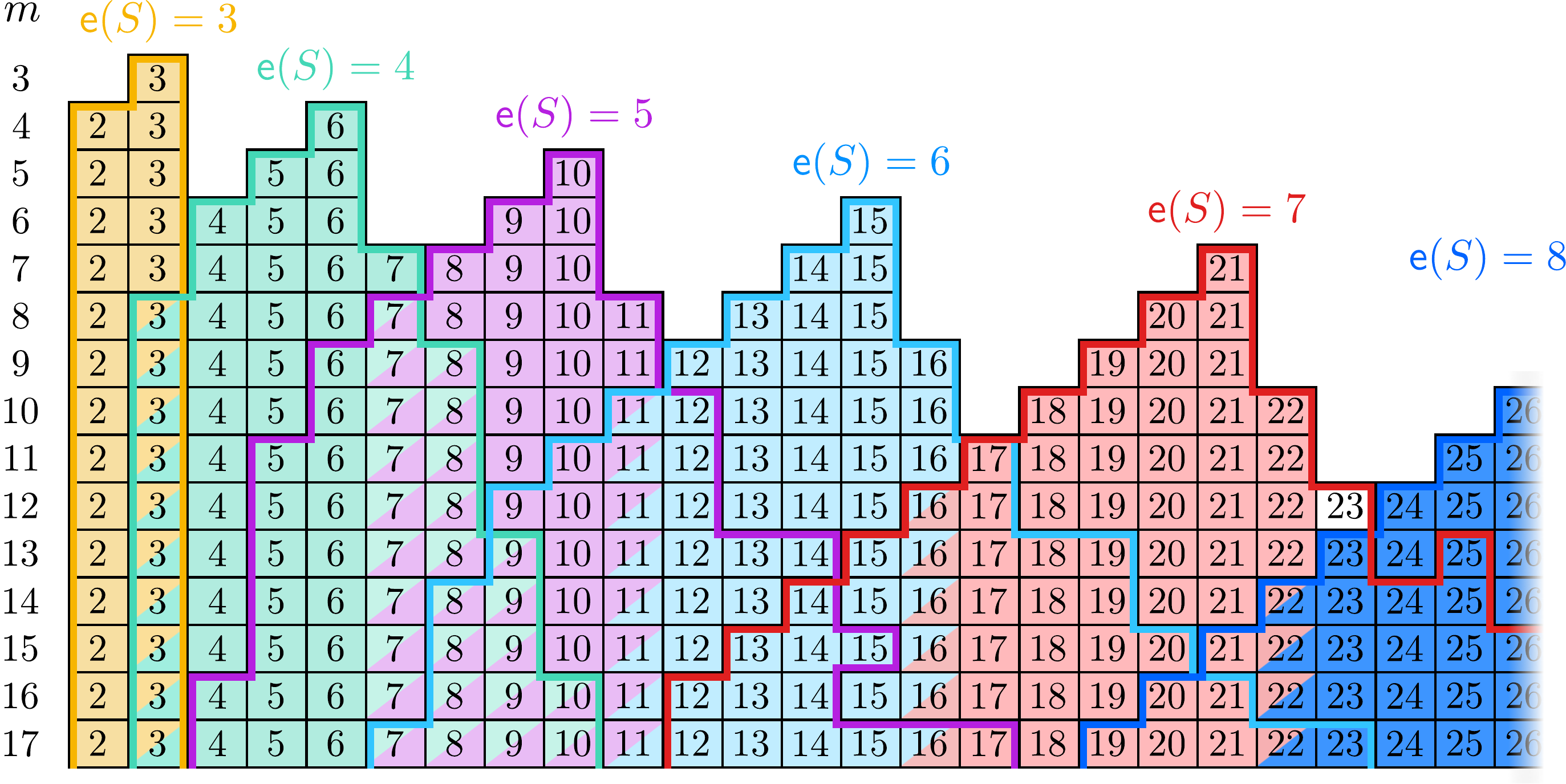}
\end{center}
\caption{The values of $\eta(S) \le 26$ attained for $\mathsf m(S) \le 17$, with those attained for each $\mathsf e(S) \le 8$ outlined.}
\label{fig:achieved}
\end{figure}

The diagram in Figure~\ref{fig:achieved} lists all values of $\eta \le 26$ achieved for $m \le 17$ and $e \le 8$, obtained computationally using algorithms from~\cite{wilfmultiplicity,kunzfaces3}.  Each row corresponds to a value of $m \le 17$, and each boxed number in that row is the value of $\eta$ achieved by some numerical semigroup with multiplicity $m$. Each bold colored edge demarcates the values of $\eta$ that are achieved by numerical semigroups with the labeled embedding dimension $e \le 8$, and the top box of each outlined region is $\eta = \binom{e}{2}$.  

Figure~\ref{fig:achieved} puts the results of this manuscript in context.  Shaded boxes indicate values of $\eta$ attained by families of numerical semigroups constructed in Theorems~\ref{t:intervalfamily}, \ref{t:extrabettifamily}, and~\ref{t:embdim4}, colored according to the value of $e$ used therein. For comparison, the results of~\cite{highembdim} characterize the first 3 rows for each embedding dimension.  Theorem~\ref{t:lowerbound} gives a lower bound $\eta \ge \binom{e}{2} - (m - e)$, which can be seen as the ``staircase'' each bold colored edge makes from $\eta = \binom{e}{2}$ down and to the left to $\eta = \binom{e-1}{2} + 1$.  We also prove in Theorem~\ref{t:upperbound} that if $e = m - 3$, then $\eta \le \binom{e}{2} + 1$, which characterizes the 4th row for each embedding dimension, and provide a streamlined proof of the upper bound given in~\cite{highembdim} for the first 3 rows.  
% The remaining ouline edges are obtained via computation.  
In Proposition~\ref{p:eta3} and Theorem~\ref{t:embdim4}, we identify additional families of numerical semigroups with $e = 4$ that achieve every value of $\eta$ outlined in the green edges.  
% (note that only the boxes achieved by Theorem~\ref{t:intervalfamily} and~\ref{t:extrabettifamily} are shaded green). 
In~fact, we conjecture that these families achieve all possible values of~$\eta$ for every $m$ when $e = 4$, which we have verified computationally for $m \le 42$.  We~close with Section~\ref{sec:extras}, which contains several open problems, along with a proof that upon restricting to each $e \geq 4$, every column in Figure~\ref{fig:achieved} with $\eta \ge e - 1$ has only finitely many missing boxes.  
% (Proposition~\ref{p:verticalinterval}).  

One additional consequence of our results pertains to the related question ``given a multiplicity $m$, what are the possible values of $\eta$?''  Only a narrow range of values now remains uncharacterized, namely those attained when 
$$
e + 3 < m < 2e
\qquad \text{and} \qquad
\textstyle\binom{e}{2} + 2 \le \eta \le \binom{e}{2} + (2e - m).
$$
The only such values in Figure~\ref{fig:achieved} are $\eta = 23$ for $m = 11$ and $m = 12$; the latter is achieved when $e = 7$, while the former is not achieved by any numerical semigroup. Indeed, $\eta = 23$ is achieved for each $m \ge 13$ with $e = 8$ by Theorem~\ref{t:intervalfamily}.

%%%%%%%%%%%%%%%%%%%%%%%%%%%%%%%%%%%%%%%%%%%%%%%%%%%%%%%%%%%%%%%%%%%%%%%%%
\section{An overview of nilsemigroups and outer Betti elements}%%%%%%%%%
\label{sec:overview}
%!TEX root = minprescard.tex

% We first recall basic numerical semigroup terminology.  
In this section, we provide a self-contained introduction to the machinery introduced in~\cite{kunzfaces3}, including Kunz nilsemigroups and outer Betti elements, with an emphasis on illustrating how this machinery can be used to approach Question~\ref{q:mainquestion}.  All definitions appearing here that are not in~\cite{kunzfaces3} can be found in the monographs~\cite{numericalappl,numerical}.  

Fix a numerical semigroup $S = \<n_1, \dots, n_k\>$.  
The \emph{embedding codimension} of $S$ is 
$$\mathsf r(S) = \mathsf m(S) - \mathsf e(S).$$
In particular, max embedding dimension numerical semigroups have embedding codimension 0.  
% The \emph{Frobenius number} of $S$ is $\mathsf F(S) = \max(\ZZ_{\ge 0} \setminus S)$, and 
Letting $m = \mathsf m(S)$, the \emph{Apery set} of $S$ is the set
$$\Ap(S) := \{n \in S: n - m \notin S\}$$
of minimal elements of $S$ within each equivalence class modulo $m$.  Since $S$ is cofinite, we~are guaranteed $|\!\Ap(S)| = m$, and that $\Ap(S)$ contains exactly one element in each equivalence class modulo $m$.  As such, we often write
$$\Ap(S) = \{a_0, a_1, \ldots, a_{m-1}\}$$
with each $a_i \equiv i \bmod m$, and view the subscripts as elements of $\ZZ_m$ (e.g. $a_{-1} = a_{m-1}$).  

Recall that a \emph{factorization} of an element $n \in S$ is an expression
$$n = z_1n_1 + \cdots + z_kn_k$$
of $n$ as a sum of atoms of $S$, which we often encode as a $k$-tuple $z = (z_1, \ldots, z_k) \in \ZZ_{\ge 0}^k$, and its \emph{length} is $z_1 + \cdots + z_k$.  
The \emph{factorization homomorphism}
$$\begin{array}{r@{}c@{}l}
\varphi_S:\ZZ_{\ge 0}^k &{}\longrightarrow{}& S \\
z &{}\longmapsto{}& z_1n_1 + \cdots + z_kn_k
\end{array}$$
is the additive semigroup homomorphism that sends each $k$-tuple $z = (z_1, \ldots, z_k)$ to the element of $S$ that $z$ is a factorization of.  Under this notation, the preimage $\varphi_S^{-1}(n) = \mathsf Z_S(n)$ is the \emph{set of factorizations} of $n \in S$.  
The \emph{kernel} of $\varphi_S$, denoted $\til = \ker\varphi_S$, relates $z \sim  z'$ whenever $\varphi_S(z) = \varphi_S(z')$, in which case we call the pair $(z,z')$ a \emph{trade} or \emph{relation}.  
The kernel is a \emph{congruence}, i.e., an equivalence relation satisfying $z + z'' \sim z' + z''$ whenever $z \sim z'$ and $z'' \in \ZZ_{\ge 0}^k$.  
A subset $\rho \subseteq \ker\varphi_S$ is called a \emph{presentation} for $S$ if the intersection of all congruences containing $\rho$ is $\ker\varphi_S$.  A presentation $\rho$ of $S$ is \emph{minimal} if no proper subset of $\rho$ is a presentation for $S$.  It is known that any two minimal presentations of $S$ have the same cardinality, which we denote $\eta(S) = |\rho|$.  
The \emph{Betti elements} of $S$ are those in the set
$$\Betti(S) := \{\varphi_S(z) : (z, z') \in \rho\},$$
where $\rho$ is any minimal presentation of $S$; the set $\Betti(S)$ is independent of the choice of $\rho$.  
The \emph{factorization graph} $\nabla_n$ of an element $n \in S$ has vertex set $\mathsf Z_S(n)$ and distinct vertices $z, z' \in \mathsf Z_S(n)$ are connected by an edge whenever $z_i > 0$ and $z_i' > 0$ for some $i$.  It is known that $n \in \Betti(S)$ if and only if $\nabla_n$ is disconnected, and in fact the number of relations $(z, z') \in \rho$ for which $n = \varphi(z)$ is one less than the number of connected components of~$\nabla_n$.  

Before seeing an example, we give one more definition that will be needed for several proofs in later sections.  
% of Theorems~\ref{t:intervalfamily} and~\ref{t:eta3} and Propositions~\ref{p:completeintersection} and~\ref{p:verticalinterval}.  
Given a second numerical semigroup $S' = \<n_1', \dots, n_\ell'\>$ and non-atoms $a \in S$ and $a' \in S'$ with $\gcd(a,a') = 1$, the \emph{gluing} of $S$ and $S'$ by $a$ and $a'$ is
$$T = a'S + aS' = \<a' n_1, \ldots, a' n_k, an_1', \ldots, an_\ell'\>,$$
for which is it known \cite{delormegluings,completeintersection} that $\mathsf e(T) = \mathsf e(S) + \mathsf e(S')$, $\mathsf m(T) = \max(a'\mathsf m(S), a\mathsf m(S'))$, and $\eta(T)=\eta(S)+\eta(S')+1$.

\begin{example}\label{e:firstexamples}
The numerical semigroup $S_1 = \<6, 7, 8, 9, 10, 11\>$ has max embedding dimension, so every nonzero element in its Ap\'ery set
$$\Ap(S_1) = \{0,7,8,9,10,11\}$$
is an atom of $S_1$.  On the other hand, $S_2 = \<8,9,28,14,15\>$ has Ap\'ery set
$$\Ap(S_2) = \{0,9,18,27,28,29,14,15\},$$
containing the non-atoms 18 and 27, both multiples of the atom 9, and $29 = 14 + 15$.  In general, elements of the Ap\'ery set are precisely those which have no factorizations involving the multiplicity.  
% $S_3 = \langle 9, 10, 12, 23, 16, 17 \rangle$
% $Ap(S_3,9) = \{0, 10, 20, 12, 22, 23, 24, 16, 17 \}$. 
% $S_4 = \langle 9,11,13,15 \rangle$
% $Ap(S_4,9) = \{0, 28, 11, 30, 13, 41, 15, 43, 26 \}$. 
Also, $S_3 = \<10,22,23,24\>$ has Ap\'ery set 
$$\Ap(S_3) = \{0,71,22,23,24,45,46,47,48,69\}$$
and the trade $(0,0,2,0) \sim (0,1,0,1)$ between factorizations of $46 \in \Ap(S_3)$. In fact this trade lies in every minimal presentation of $S_3$.  
\end{example}

Recall that a nilsemigroup is a semigroup with a universally absorbing element $\infty$, called the \emph{nil}. Let $(N,+)$ be a nilsemigroup that is finite, has an identity $0 \in N$, and is \emph{partly cancellative}: $a + b = a + c \ne \infty$ implies $b = c$ for all $b, c \in N$.
Like numerical semigroups, any finite partly cancellative nilsemigroup has a unique minimal generating set.  We~write $\mathsf m(N) = |N| - 1$ for the number of non-nil elements and $\mathsf e(N)$ for one more than the number of minimal generators, which are also called \emph{atoms}.  
% Let $\mathsf e(N) = e$ denote the number of generators and $m$ denote the number of non-nil elements. 
% We define the \textit{embedding codimension} of $N$ to be $m-e$. The embedding codimension of a numerical semigroup is the embedding codimension of its Kunz nilsemigroup.

The \emph{Kunz nilsemigroup} $N$ of a numerical semigroup $S$ is obtained from $S/\til$, where $\til$ is the congruence that relates $a \sim b$ whenever $a = b$ or $a, b \notin \Ap(S)$ (the set $S \setminus \Ap(S)$ comprises the nil of $S/\til$), by replacing each non-nil element with its equivalence class in $\ZZ_m$.  This ensures $\mathsf e(N) = \mathsf e(S)$ and $\mathsf m(N) = \mathsf m(S)$, as the minimal generators of $N$ are the minimal generators of $S$ distinct from the multiplicity.  

A finite partly cancellative nilsemigroup $N$ may be visualized by examining the \emph{divisibility poset} $P$ of non-nil elements, wherein $b \preceq c$ when $c = a + b$ for some $a \in N$.  Partial cancellativity ensures $c$ covers $b$ when $c = a + b$ for some atom $a$, so in fact the additive structure of $N$ can be recovered from the poset structure of $P$.  If $N$ is the Kunz nilsemigroup of a numerical semigroup $S$, we call $P$ the \emph{Kunz poset} of $S$.  

\begin{example}\label{e:kunzposets}
The Kunz posets of the numerical semigroups $S_1, S_2$, and $S_3$ from Example~\ref{e:firstexamples} are depicted in Figures~\ref{fig:examples1} and~\ref{fig:examples2}, with one black dot for each non-nil element.  The dashed edges and red dots in each depiction will be addressed in Examples~\ref{e:nilsemigroupminpres} and~\ref{e:outerbettis} once the necessary definitions have been discussed.  

Each element covering 0 is a nilsemigroup atom, and the edges throughout each depiction are colored to reflect the fact that each cover relation results from adding a nilsemigroup atom.  Considering the nilsemigroup $N_2$ of $S_2$, for instance, the 3 nonzero non-atoms are 2 and 3, both of which are multiples of $1 \in N_2$, and $5 = 6 + 7 \in N_2$.  This perfectly encodes the additive structure of the elements of $\Ap(S_2)$ discussed in Example~\ref{e:firstexamples}.  Moreover, one can see in the depiction of the Kunz nilsemigroup $N_3$ of $S_3$ in Figure~\ref{fig:examples2} that $a_6 \in \Ap(S_3)$ has two distinct factorizations $a_6 = a_2 + a_4 = 2a_3$, which constitute a trade $(0,1,0,1) \sim (0,0,2,0)$.  
\end{example}

\begin{figure}[t]
\begin{center}
\includegraphics[height=1.2in]{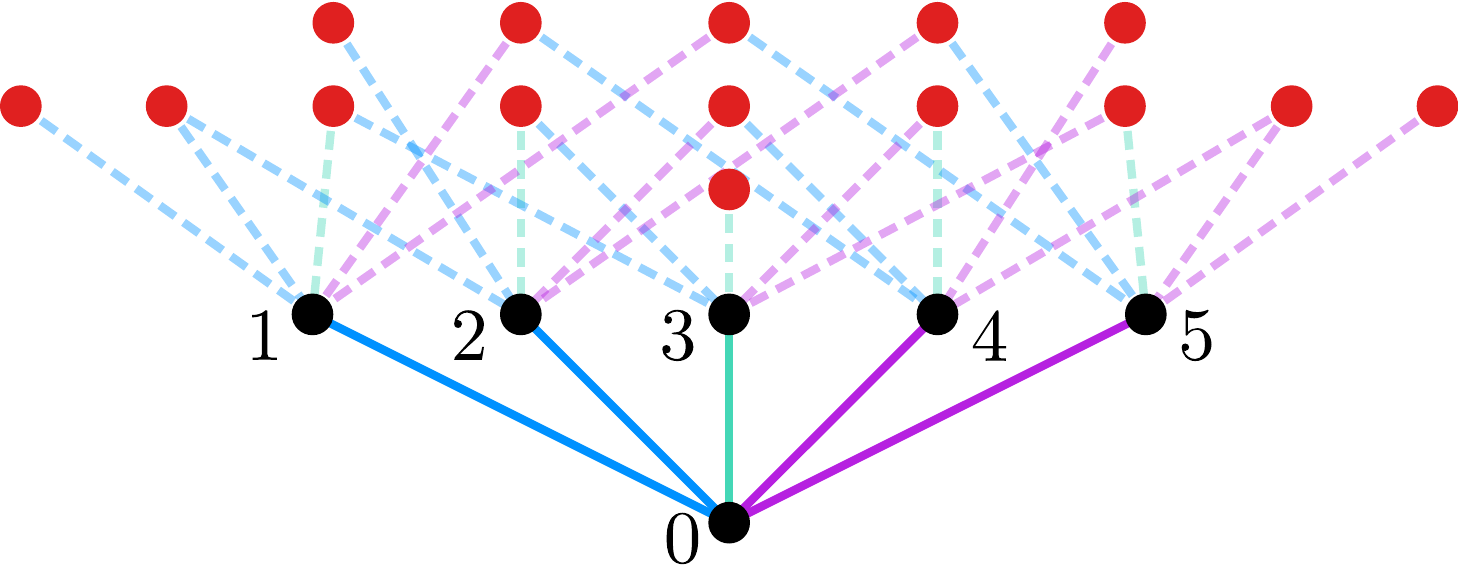}
\hspace{3em}
\includegraphics[height=1.7in]{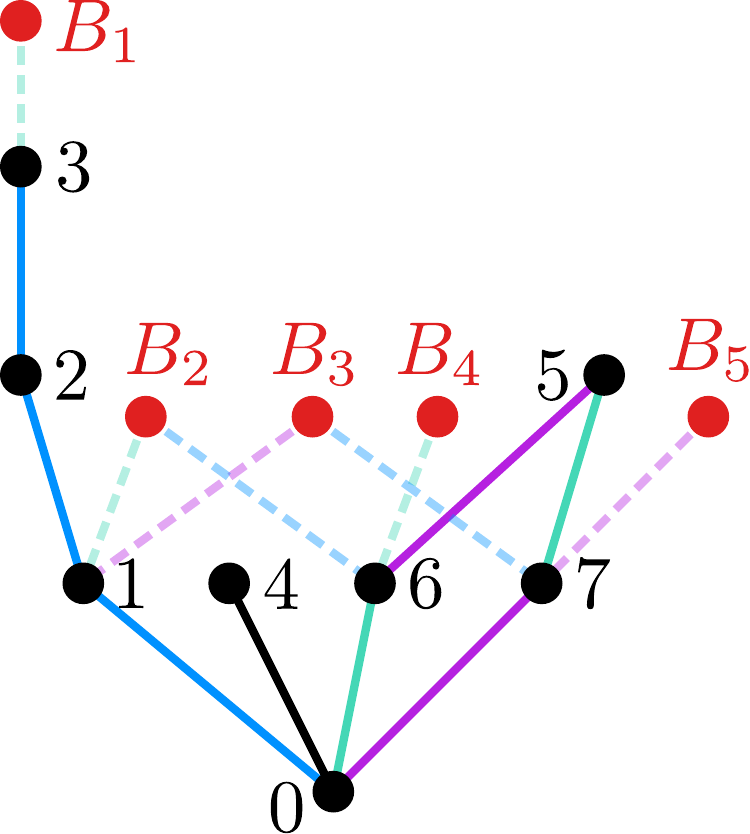}
\end{center}
\caption{The Kunz posets of $S_1$ (left) and $S_2$ (right) from Example~\ref{e:kunzposets}, with outer Betti elements depicted as in Example~\ref{e:outerbettis}.}
\label{fig:examples1}
\end{figure}

\begin{figure}[t]
\begin{center}
\includegraphics[height=1.5in]{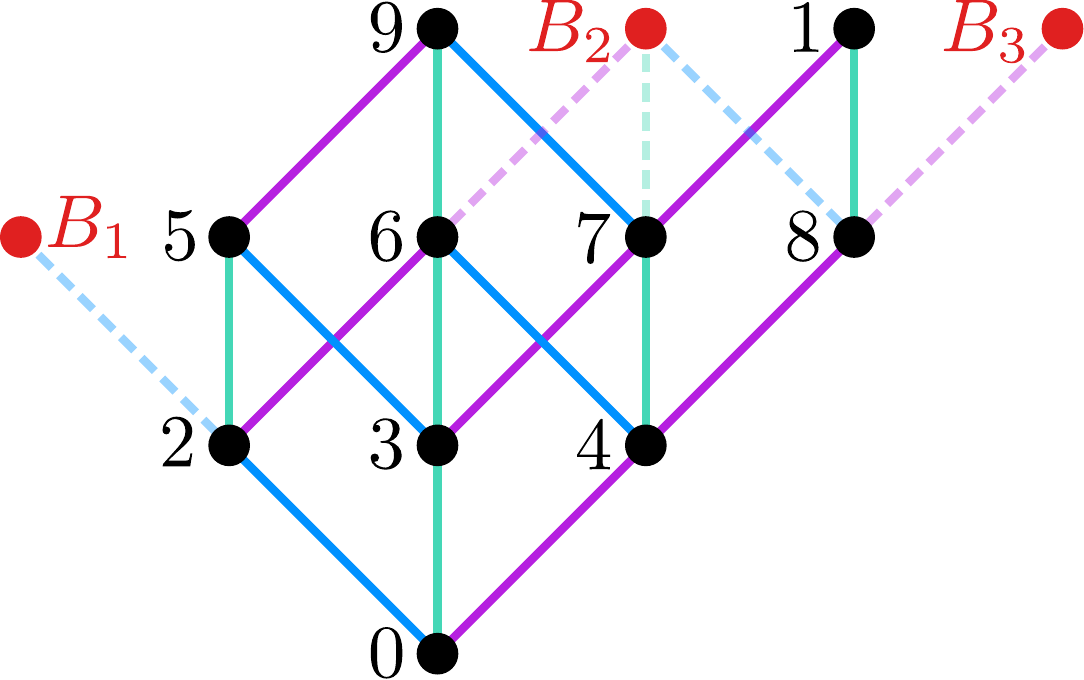}
\hspace{3em}
\includegraphics[height=1.5in]{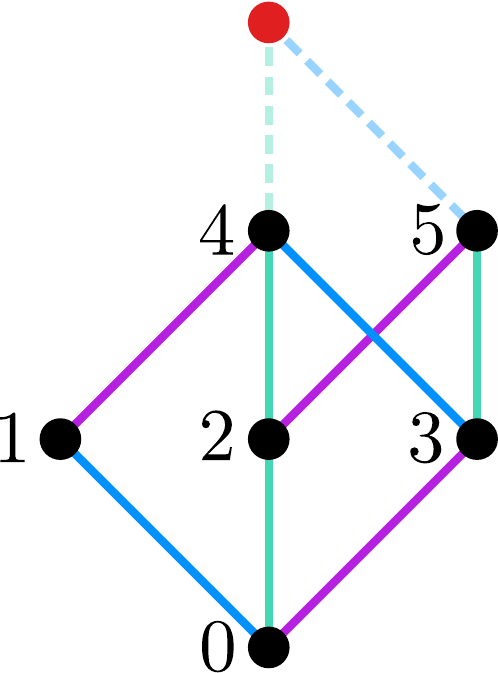}
\end{center}
\caption{The Kunz posets of the semigroups $S_3 = \<10,22,23,24\>$ (left) from Example~\ref{e:kunzposets} and $S_4 = \<6,7,8,9\>$ (right) from Example~\ref{e:lemma56}.}
\label{fig:examples2}
\end{figure}

% \begin{figure}[t]
% \begin{center}
% \includegraphics[height=1.2in]{}
% \hspace{-1em}
% \includegraphics[height=1.2in]{figures/med-bettis-example.pdf}
% \end{center}
% \caption{The Kunz poset of the semigroup $S_1 = \<6,7,8,9,10,11\>$, both with (right) and without (left) outer Betti elements depicted.}
% \label{fig:medexample}
% \end{figure}

Given a finite partly cancellative nilsemigroup $N$ with $\mathsf e(N) = k$, one can analogously define the factorization homomorphism $\varphi_N:\ZZ_{\ge 0}^{k-1} \to N$, with $\mathsf Z_N(p) = \varphi_N^{-1}(p)$ for each $p \in N$.  Partial cancellativity ensures $|\mathsf Z_N(p)| < \infty$ unless $p = \infty$.  
If $N$ is the Kunz nilsemigroup of a numerical semigroup $S$, then for each $a_i \in \Ap(S)$, omitting the first coordinate of each factorization in $\mathsf Z_S(a_i)$ yields $\mathsf Z_N(i)$, and $\mathsf Z_N(\infty)$ contains all remaining elements of $\ZZ_{\ge 0}^{k-1}$.  

We say $\rho \subset \ker\varphi_N$ is a \emph{minimal presentation} if it is obtained from a minimal generating set of $\ker\varphi_N$ by omitting any trade $z \sim z'$ with $\varphi_N(z) = \infty$.  Likewise, 
$$\Betti(N) := \{\varphi_N(z) : (z, z') \in \rho\}.$$
The omission of trades occurring at $\infty$ is a slight departure from the ``usual'' definition of a minimal presentation from semigroup theory, but is more natural if $N$ is the Kunz nilsemigroup of a numerical semigroup $S$.  A minimal presentation for $N$ can be obtained from a minimal presentation $\rho$ for $S$ by (i) omitting any trade $(z, z')$ with $\varphi_S(z) \notin \Ap(S)$ and (ii) omitting the leading 0 entry from both factorizations in all remaining trades.  In fact, $$\Betti(N) = \{i : a_i \in \Ap(S) \cap \Betti(S)\}.$$
There is also a partial converse.  Fix a minimal presentation $\rho$ for $S$, partitioned as $\rho = \rho' \cup \rho''$ where $(z, z') \in \rho''$ whenever $\varphi_S(z) \in \Ap(S)$.  Let $\rho'''$ be a collection of trades for $S$ obtained from a minimal presentation for $N$ by prepending a 0 entry to both factorizations in each trade.  Then $\rho' \cup \rho'''$ is also minimal presentation for $S$.

\begin{example}\label{e:nilsemigroupminpres}
The only numerical semigroup in Example~\ref{e:firstexamples} whose Kunz nilsemigroup has nonempty minimal presentation is $S_3$, and $\{((1,0,1), (0,2,0))\}$ is in fact the only possible minimal presentation for its Kunz nilsemigroup $N_3$.  
Note the distinction between the above trade and the one for $S_3$ given at the end of Example~\ref{e:kunzposets}:\ since $S_3$ has one additional generator, namely $\mathsf m(S) = 10$, each factorization has one additional entry.  
For comparison, $\eta(S_3) = 4$ and 
$$\Betti(S_3) = \{44, 46, 70, 72\},$$
with $\Ap(S_3) \cap \Betti(S_3) = \{46\}$.  
One possible minimal presentation $\rho$ of $S_3$ has trades
$$
% \rho = \left\{
\begin{array}{@{}l@{\,\,\,\,}l@{}}
(0,2,0,0) \sim (2,0,0,1), & (0,1,0,1) \sim (0,0,2,0), \\
(0,0,2,1) \sim (7,0,0,0), & (0,0,0,3) \sim (5,1,0,0)
\end{array}
% \right\}
$$
each occurring at the corresponding Betti element.  
It is at this point that we can begin to see the role the red dots and dashed edges play in Figures~\ref{fig:examples1} and~\ref{fig:examples2}:\ these are the locations of the trades in $\rho$ that occur at Betti elements outside the Ap\'ery set.  Indeed, each red dot is labeled with the equivalence class of some $b \in \Betti(S_3) \setminus \Ap(S_3)$ modulo $\mathsf m(S_3) = 10$, and the first factorization in each such trade above indicates, as we will see below, the ``factorization'' of the corresponding red dot.  
\end{example}

We are now ready to define outer Betti elements.  First, the \emph{support} of a factorization $z \in \ZZ_{\ge 0}^k$ and a subset $Z \subset \ZZ_{\ge 0}^k$ are given by
$$
\supp(z) = \{i : z_i > 0\}
\qquad \text{and} \qquad
\supp(Z) = \{i : z_i' > 0 \text{ for some } z' \in Z\},
$$
respectively.  Let $\nabla_Z$ denote the graph whose vertex set is $Z$ wherein distinct vertices $z, z' \in Z$ are connected by an edge whenever $\supp(z) \cap \supp(z')$ is nonempty, and for each $i \in \supp(Z)$, define 
$$Z - e_i = \{ z - e_i :  z \in Z \text{ with } i \in \supp(z) \},$$
where $e_i$ is the $i$-th standard basis vector.  

Now, an \emph{outer Betti element} of a finite partly cancellative nilsemigroup $N$ is a subset $B \subset Z_N(\infty)$  such that
\begin{enumerate}[(i)]
\item 
for every $i \in \supp(B)$, we have $B - e_i = \mathsf Z_N(p)$ for some $p \in N \setminus \{\infty\}$, and

\item 
the graph $\triangledown_B$ is connected.
\end{enumerate}
We denote by 
$$b(N) = \text{number of outer Betti elements of } N,$$
and $\eta(N) = b(N) + |\rho|$, where $\rho$ is any minimal presentation of $N$.  

Before examining outer Betti elements in more detail, we present the following consolidation of the main results of~\cite[Section~5]{kunzfaces3} pertaining to outer Betti elements and minimal presentations.  

\begin{thm}\label{t:kunzfaces3}
If $\rho$ is a minimal presentation for the Kunz nilsemigroup $N$ of a numerical semigroup $S$, then $\rho' \cup \rho''$ is a minimal presentation for $S$, where:
\begin{enumerate}[(i)]
\item
$\rho'$ contains one trade $(z, z')$ for each outer Betti element $B$ of $N$, where $z$ is obtained from a factorization in $B$ by prepending a 0, and $z'$ is any factorization of $\varphi_S(z)$ with positive first coordinate; and

\item 
$\rho''$ is obtained from $\rho$ by prepending a 0 to both factorization of each trade.  

\end{enumerate}
In particular, $\eta(S) = \eta(N) = b(N) + |\rho|$.
\end{thm}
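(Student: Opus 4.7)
The plan is to verify $\rho' \cup \rho''$ is a minimal presentation of $S$ by comparing its trades, Betti element by Betti element, to those of any minimal presentation of $S$. Since any minimal presentation must contain exactly (number of components of $\nabla_b$) $-1$ trades at each $b \in \Betti(S)$, and those trades must collectively connect the components, it suffices to show that for every $b \in \Betti(S)$ the trades of $\rho' \cup \rho''$ occurring at $b$ have the right count and together connect all components of $\nabla_b$. The count $\eta(S) = b(N) + |\rho|$ then follows by summation.

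I first partition $\Betti(S)$ into Apery and non-Apery pieces. For $a_i \in \Ap(S) \cap \Betti(S)$, every factorization of $a_i$ has first coordinate zero, so the map $z \mapsto (0,z)$ is a support-preserving bijection $\mathsf Z_N(i) \to \mathsf Z_S(a_i)$ carrying $\nabla_i$ in $N$ isomorphically onto $\nabla_{a_i}$ in $S$; hence the lifted trades in $\rho''$ occurring at $a_i$ contribute the correct number and connect the correct components. The main work is for $b \in \Betti(S) \setminus \Ap(S)$, where the components of $\nabla_b$ decompose into one ``big'' component containing all factorizations with $z_1 > 0$ (they all share index $1$) and several ``small'' components of factorizations with $z_1 = 0$. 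The central claim is a bijection between small components of $\nabla_b$ and outer Betti elements of $N$ whose lifts land in $\mathsf Z_S(b)$. In the forward direction, given a small component $B$, connectedness of $\nabla_B$ is immediate, and for each $i \in \supp(B)$ one shows $b - n_i \in \Ap(S)$ (otherwise some factorization of $b-n_i$ with positive first coordinate yields a factorization of $b$ in the big component sharing index $i$ with $B$), then that $B - e_i$ equals $\mathsf Z_N((b-n_i)\bmod m)$ in full (any factorization of $b-n_i$ has $z_1 = 0$, and adding $e_i$ yields an element of $\mathsf Z_S(b)$ sharing index $i$ with $B$, hence lying in $B$). In the reverse direction, the same two observations prevent an outer Betti element, once lifted, from sharing support with any factorization of $b$ outside itself, so it constitutes a full small component.

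With the bijection in hand, the accounting at each non-Apery $b$ matches: the number of trades of $\rho'$ at $b$ equals the number of outer Betti elements of $N$ mapping to $b$, which equals the number of small components, one less than the total number of components of $\nabla_b$; moreover each such trade pairs a factorization in a small component with one having positive first coordinate, thereby connecting that small component to the big one. Combined with the Apery case, this yields a minimal presentation of $S$ with $|\rho'| + |\rho''| = b(N) + |\rho| = \eta(N)$ trades, giving both conclusions. I expect the forward direction of the bijection, specifically the verification of condition~(i) of the outer Betti definition, to be the main obstacle, since it requires carefully interleaving Apery-set membership with the set-theoretic definition of $B - e_i$; everything else reduces to bookkeeping once the correspondence is established.
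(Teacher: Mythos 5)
The paper itself offers no proof of this theorem --- it is stated as a consolidation of results from \cite{kunzfaces3} --- so there is no in-paper argument to compare against line by line. That said, your proof is essentially the argument the cited machinery is designed for, and it is correct in outline: the reduction to a spanning-tree count on the components of $\nabla_b$ at each Betti element, the isomorphism $\nabla_i \cong \nabla_{a_i}$ handling the Ap\'ery Betti elements, and the splitting of $\nabla_b$ for $b \notin \Ap(S)$ into one ``big'' component (all factorizations with positive first coordinate, which exist since $b - m \in S$) plus ``small'' components matched with outer Betti elements. Your verification of condition~(i) in the forward direction --- first $b - n_i \in \Ap(S)$, then $B - e_i = \mathsf Z_N(p)$ in full --- is exactly right, and the reverse direction correctly uses the same two facts to show a lifted outer Betti element is support-isolated from the rest of $\mathsf Z_S(b)$.

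One step needs to be made explicit rather than assumed. Your bijection is phrased as being with ``outer Betti elements of $N$ whose lifts land in $\mathsf Z_S(b)$,'' which presupposes that all factorizations in a given outer Betti element $B$ lift to factorizations of a \emph{single} element of $S$; the paper itself flags this as ``not obvious from definitions'' (Example~\ref{e:outerbettis}). Without it, an outer Betti element whose lifts landed in two different elements of $S$ would be omitted from every $\nabla_b$, and the identity $b(N) = \sum_b (\text{number of small components of } \nabla_b)$ underlying the final count $\eta(S) = b(N) + |\rho|$ would not follow. The fact is true and follows from your own observations: if $z, z' \in B$ share a support index $i$, then $z - e_i,\, z' - e_i \in B - e_i = \mathsf Z_N(p)$ for a single non-nil $p$, so both lift to factorizations of $a_p + n_i$, and connectivity of $\nabla_B$ propagates this to all of $B$. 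Insert that observation at the start of your reverse direction and the argument is complete.
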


\begin{example}\label{e:outerbettis}
Any numerical semigroup $S$ with an \emph{Ap\'ery set of unique expression}, meaning that each Ap\'ery set element has exactly one factorization, has Kunz nilsemigroup $N$ whose outer Betti elements are singletons, and each contains a factorization of $\infty$ that is minimal with respect to the componentwise partial order. As such, $\eta(S) = b(N)$ by Theorem~\ref{t:kunzfaces3}.  
This is the case for the numerical semigroups $S_1$ and $S_2$ from Example~\ref{e:firstexamples}, but not $S_3$.  

For the Kunz nilsemigroup $N_1$ of $S_1$, 
$$\mathsf Z_{N_1}(\infty) = \{z \in \ZZ_{\ge 0}^5 : z_1 + \cdots + z_5 \ge 2\},$$
so there are $\binom{6}{2}$ outer Betti elements, each containing a length 2 factorization.  
This generalizes to a known result that any max embedding dimension numerical semigroup $S$ with $\mathsf m(S) = m$ has $\eta(S) = \binom{m}{2}$, with one trade for each length 2 factorization not involving $m$.  

For the Kunz nilsemigroup $N_2$ of $S_2$, we have that $\mathsf Z_{N_2}(\infty)$ has $\eta(S) = 9$ factorizations that are minimal with respect to the component-wise partial order:\ the 5 depicted in Figure~\ref{fig:examples1}, and one for each factorization of the form $e_2 + e_i$ for $i \in [1,4]$.  Note that a finite partly cancellative nilsemigroup with 4 atoms and no other nonzero non-nil elements would have 10 outer Betti elements:\ 
\begin{itemize}
\item
one is $\{e_3 + e_4\}$, which is not an outer Betti element of $N_2$ since $6 + 7 = 5 \in N_2$;

\item
one is $\{2e_1\}$, which is not an outer Betti element of $N_2$, although $\{4e_1\}$ is an outer Betti element with identical support; and

\item
the other 8 are identical to those of $N_2$.  

\end{itemize}
These ideas are utilized in constructing the family of semigroups in Theorem~\ref{t:intervalfamily}.  

Unlike $S_1$ and $S_2$, the semigroup $S_3$ does not have an Ap\'ery set of unique expression.  Indeed, the Kunz nilsemigroup $N_3$ of $S_3$ has 3 singleton outer Betti elements, along with the outer Betti element 
$$B_3 = \{(0,2,1), (1,0,2)\}$$
which is non-singleton since it lies above $6 \in N_3$, which has 2 factorizations.  Intuitively, the factorization graph $\nabla_{70}$ has an edge between $(0,0,2,1)$ and $(0,1,0,2)$, the trade between which occurs at $46$.  This is the motivation for requirement~(ii) in the definition of outer Betti element:\ any minimal factorizations of the nil that are connected by a trade at a non-nil element cannot yield more than one trade under Theorem~\ref{t:kunzfaces3}.  
More generally, although it need not be obvious from definitions, Theorem~\ref{t:kunzfaces3} implies that prepending a 0 to any two factorizations in a given outer Betti element $B$ yields two factorizations of the same element of $S$.  
\end{example}

\begin{example}\label{e:lemma56}
The numerical semigroup $S_4 = \<6,7,8,9\>$, and its Kunz nilsemigroup $N$ depicted in Figure~\ref{fig:examples2}, illustrate the subtleties of part~(i) in the definition of outer Betti elements.  On the one hand, any factorization in an outer Betti element $B$ must be a minimal element of $\mathsf Z_N(\infty)$.  However, the converse need not hold:\ $(0,2,1) \in \mathsf Z_N(\infty)$ is minimal, but it does not lie in any outer Betti element, as the trade $(0,2,0) \sim (1,0,1)$ occurring at $4 \in N$ connects it via an edge to $(1,0,2)$, which is not minimal in $\mathsf Z_N(\infty)$.  Indeed, we see $25 = 7 + 9 + 9$ is not a Betti element of $S_4$ since its 3 factorizations form a connected graph $\nabla_{25}$.  

There is also an algorithmic way to compute the outer Betti elements of a given nilsemigroup from the set of minimal factorizations of the nil.  Build a graph $G$ whose vertex set $Z$ is comprised of the minimal elements of $\mathsf Z_N(\infty)$, and include an edge between $z, z' \in Z$ whenever $z - e_i, z' - e_i \in \mathsf Z_N(p)$ for some $i \in \supp(z) \cap \supp(z')$ and non-nil $p \in N$.  By \cite[Lemma~5.6]{kunzfaces3}, each outer Betti element will be a connected component of $G$, so one simply needs to compute the connected components of this graph and check which satisfy condition~(i).  In particular, any connected component~$B$ of $G$ has $\nabla_B$ connected, and for each $i \in \supp(B)$, $B - e_i \subseteq \mathsf Z_N(p)$ for some non-nil $p$, so $B$ is an outer Betti element of $N$ if and only if equality holds for each $i$.  
\end{example}

\begin{example}\label{e:outerbettinotbetti}
Note that each outer Betti element corresponds to an element of $\rho$, not an element of $\Betti(S)$.  In particular, two outer Betti elements can correspond to relations under Theorem~\ref{t:kunzfaces3} that occur at the same element of $S$.  With~$S_1$ as an example, the outer Betti elements 
$$
B = \{(0,1,1,0,0)\}
\qquad \text{and} \qquad
B' = \{(1,0,0,1,0)\}
$$
% $$
% (0,0,2,0,0,0) \sim (1,0,0,0,1,0)
% \qquad \text{and} \qquad
% (0,1,0,1,0,0) \sim (1,0,0,0,1,0),
% $$
each yield a relation at $17 = 8 + 9 = 7 + 10 \in S_1$.  More generally, the outer Betti elements of $S_1$ that yield relations at the same element of $S_1$ are depicted above/below each other in Figure~\ref{fig:examples1}.  
On the other hand, $S = \<6,13,8,9,10,11\>$ has identical Kunz nilsemigroup to $S_1$, but $B$ and $B'$ yield relations at $17 = 8 + 9$ and $23 = 13 + 10$, respectively, so one cannot determine from the Kunz nilsemigroup alone which outer Betti elements yield relations at the same numerical semigroup element.  This is an advantage when examining minimal presentation cardinality using Kunz nilsemigroup machinery, as it eliminates potential casework.  
\end{example}

\begin{remark}\label{r:groenberbases}
There is a connection between Ap\'ery sets and Gr\"obner bases of polynomial ideals that is relevant here.  
% One can identify the Kunz nilsemigroup $N$ of a numerical semigroup $S$ as a \emph{Rees quotient}, wherein the elements of a semigroup ideal $I \subset S$ (in this case, $I = S \setminus \Ap(S)$) are identified into a single (nil) element~\cite{reesquotientsns}.  This has also been studied in the realm of commutative algebra.  
The kernel $I_S$ of the map $\QQ[x_1, \ldots, x_k] \to \QQ[t]$ given by $x_i \mapsto t^{n_i}$ is known as the defining toric ideal of $S = \<n_1, \ldots, n_k\>$.  A minimal binomial generating set for $I_S$ has one binomial for each trade in a minimal presentation of $S$, and such a minimal generating set can be computed using Gr\"obner bases (see~\cite{grobpoly} for background on Gr\"obner bases of toric ideals).  
The ideal $J = I_S + \<x_1\>$ has been utilized to study homological properties of $I_S$ and to obtain an algorithm for computing $\Ap(S)$ that utilizes Gr\"obner bases~\cite{compapery,shortresolutionalg}.  In this context, one may obtain a minimal generating set for $J$ consisting of $x_1$ and one binomial for each trade in a minimal presentation of $N$.  In fact, under certain term orders, the initial ideal $M$ of~$I_S$ has one monomial generator for each outer Betti element of $N$, and each monomial outside of $M$ corresponds to a factorization of a distinct element of $\Ap(S)$.  
\end{remark}

%%%%%%%%%%%%%%%%%%%%%%%%%%%%%%%%%%%%%%%%%%%%%%%%%%%%%%%%%%%%%%%%%%%%%%%%%
\section{A lower bound on minimal presentation cardinality}%%%%%%%%%%%%%%
\label{sec:lowerbound}
%!TEX root = minprescard.tex

\begin{notation}
Unless otherwise stated, in the remainder of the paper:\ $m, e, r, \eta \in \ZZ_{\ge 0}$ are fixed with $m \ge e \ge 3$ and $m = e + r$; $S$~denotes a numerical semigroup with $\mathsf m(S) = m$, $\mathsf e(S) = e$, $\mathsf r(S) = r$, and $\eta(S) = \eta$; and $N$~denotes a finite partly cancellative nilsemigroup with $m$ non-nil elements, $e-1$ atoms and $\eta(N) = \eta$.  
\end{notation}

In this section, we prove $\eta \ge \binom{e}{2} - r$ for any numerical semigroup, a lower bound we will demonstrate is sharp for $r < e$ in Theorem~\ref{t:intervalfamily}.  This lower bound coincides with the one for $r \in [0,2]$ given in~\cite{highembdim}.
% though our proof is more streamlined and avoids any casework.  

\begin{defn}\label{d:maximalelement}
An element $p\in N\setminus\{\infty\}$ is called \emph{maximal} if $p+p' = \infty$ for any nonzero $p' \in N$. The \emph{quotient} of $N$ by a maximal element $p$, which we denote by $N/p$, is the quotient nilsemigroup $N/\til$ where $\til$ is the congruence whose only nontrivial relation is $p \sim\infty$.  
% For a maximal element $p\in N$, we denote by $b(p) = b(N/p) - b(N)$ the difference between the number of outer Betti elements of $N/p$ and $N$. 
In particular, $\mathsf Z_{N/p}(\infty) = \mathsf Z_N(\infty) \cup \mathsf Z_N(p)$, while $\mathsf Z_{N/p}(p') = \mathsf Z_N(p')$ for each $p' \in N \setminus \{p, \infty\}$.  
\end{defn}

\begin{lemma}\label{l:lowerbound}
Let $p$ be a maximal element of $N$. If there are $k$ minimal relations occurring at $p$, then $b(N) + k + 1 - b(N/p)$ equals the number of outer Betti elements of $N$ divisible by $p$.  In particular, $b(N/p) - 1 \le b(N) + k$.  
\end{lemma}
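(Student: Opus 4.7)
The plan is to partition the outer Betti elements of both $N$ and $N/p$ according to whether they involve $p$, then establish a bijection between the ``non-$p$'' pieces and count the ``$p$'' pieces directly. Let $A$ and $D$ denote the outer Betti elements of $N$ that are, respectively, not divisible by and divisible by $p$, and let $A'$ and $D'$ denote the outer Betti elements of $N/p$ that are, respectively, disjoint from and meet $\mathsf Z_N(p)$. Since $\mathsf Z_N(p) = \mathsf Z_{N/p}(\infty) \setminus \mathsf Z_N(\infty)$, each of $(A,D)$ and $(A',D')$ is a genuine partition, so $b(N) = |A| + |D|$ and $b(N/p) = |A'| + |D'|$, and the claim reduces to the two identities $|A| = |A'|$ and $|D'| = k + 1$.

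The key preliminary is that partial cancellativity forces any two distinct factorizations of the same non-nil element to be componentwise incomparable; in particular, every element of $\mathsf Z_N(p)$ is componentwise-minimal in $\mathsf Z_{N/p}(\infty)$. To set up the bijection $A \leftrightarrow A'$, given $B \in A$, I would first use the maximality of $p$ to show no $z \in B$ lies weakly above a factorization of $p$: such an inequality would yield an index $i$ with $z - e_i$ above some $w \in \mathsf Z_N(p)$, and combining this with $z - e_i \in B - e_i = \mathsf Z_N(p'(i))$ for $p'(i) \notin \{p, \infty\}$ would force $p'(i) \in \{p, \infty\}$, a contradiction. Hence $B$ consists of minimal factorizations of $\infty$ in $N/p$; the edges of $G_N$ and $G_{N/p}$ coincide on $B$; and condition~(i) transfers verbatim since $\mathsf Z_N(p'(i)) = \mathsf Z_{N/p}(p'(i))$. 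For the reverse direction, given $B' \in A'$ and any $G_N$-edge $(z,z')$ with $z \in B'$, the identity $B' - e_i = \mathsf Z_N(q)$ for some $q \ne p$ forces $z' - e_i \in B' - e_i$, so $z' \in B'$, showing $B'$ is a full $G_N$-connected component lying in the minimal factorizations of $\infty$.

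For $|D'| = k + 1$, the crucial observation is that $G_{N/p}$ admits no edge from $\mathsf Z_N(p)$ to a minimal factorization of $\infty$ that comes from $\mathsf Z_N(\infty)$: such an edge via index $i$ would give $\varphi_N(z' - e_i) = p - a_i$ for some $z' \in \mathsf Z_N(\infty)$, forcing $\varphi_N(z') = (p - a_i) + a_i = p \ne \infty$, a contradiction. Hence the $G_{N/p}$-components meeting $\mathsf Z_N(p)$ coincide with the connected components of $\nabla_p$ restricted to $\mathsf Z_N(p)$, of which there are exactly $k + 1$. Each such component $C$ is automatically an outer Betti element of $N/p$: condition~(ii) is immediate, and for condition~(i), given $i \in \supp(C)$ and $u \in \mathsf Z_N(p - a_i)$, the antichain property makes $u + e_i$ a minimal factorization of $p$ sharing the support index $i$ with some element of $C$, so connectedness places $u + e_i$ in $C$ and hence $u$ in $C - e_i$, yielding $C - e_i = \mathsf Z_{N/p}(p - a_i)$. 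Combining yields $b(N/p) = |A| + (k+1)$, and subtracting $b(N) = |A| + |D|$ gives the claimed identity; the ``in particular'' then follows from $|D| \ge 0$. The main obstacle is tracking how minimality and connectedness transform under the quotient, and the antichain property of $\mathsf Z_N(p)$ is the load-bearing technical ingredient that keeps condition~(i) from failing for the components $C$.
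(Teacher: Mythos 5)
Your proof is correct and follows essentially the same route as the paper's: partition the outer Betti elements of $N$ and of $N/p$ according to divisibility by $p$ (resp.\ intersection with $\mathsf Z_N(p)$), show the $p$-free parts coincide, and identify the outer Betti elements of $N/p$ contained in $\mathsf Z_N(p)$ with the $k+1$ connected components of $\nabla_p$. You supply somewhat more detail than the paper does (e.g., ruling out outer Betti elements of $N/p$ that straddle $\mathsf Z_N(p)$ and $\mathsf Z_N(\infty)$, and verifying condition~(i) for each component of $\nabla_p$), but the underlying argument is the same.
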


\begin{proof}
Fix an outer Betti element $B$ of $N$.  If $B$ is divisible by $p$ (that is, $B - e_j = \mathsf Z_N(p)$ for some $j \in \supp(B)$), then no factorization in $B$ can appear in an outer Betti element of $N/p$ since $B - e_j \subset \mathsf Z_{N/p}(\infty)$.  If, on the other hand, $B$ is not divisible by~$p$, then $B$ is also an outer Betti element of $N/p$, as the factorizations in $B - e_j$ are unaffected by the quotient for each $j \in \supp(B)$.  Each outer Betti element of $N/p$ thus either coincides with an outer Betti element of $N$ or consists of factorizations in $\mathsf Z_N(p)$.  

Let $B_1, \ldots, B_n$ denote the outer Betti elements of $N/p$ that are contained in $\mathsf Z_N(p)$.
% , and fix a factorization $z_i \in B_i$ for each $i$  
Since outer Betti elements have connected factorization graphs, and since for each $j \in \supp(B_i)$, $B_i - e_j = \mathsf Z_N(p')$ for some $p' \in N$, the connected components of $\nabla_p$ in $N$ must be precisely $B_1, \ldots, B_n$.  This implies there are $n - 1 = k$ relations occurring at $p$ in $N$, so we obtain
\[b(N/p) - 1 \le b(N) + n - 1 = b(N) + k\]
thereby proving our claim.
\end{proof}

\begin{thm}\label{t:lowerbound}
For any numerical semigroup $S$, we have $\eta \ge \binom{e}{2} - r$.
\end{thm}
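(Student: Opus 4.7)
The plan is to apply Lemma~\ref{l:lowerbound} iteratively to the Kunz nilsemigroup $N$ of $S$, successively quotienting by a non-atom maximal element until only $0$ and the $e-1$ atoms remain as non-nil elements, and then computing $\eta$ directly for the resulting ``trivial'' nilsemigroup. More precisely, I would construct a chain $N = N_0, N_1, \ldots, N_r$ where $N_{i+1} = N_i/p_i$ for some non-atom non-zero element $p_i$ that is maximal in the non-nil divisibility poset of $N_i$. Such $p_i$ exists for each $i < r$: no atom can lie strictly above a non-atom non-zero element (such a decomposition would contradict the atom being a minimal generator), so a maximal element of the subposet of non-atoms of $N_i$ is automatically maximal in all of $N_i$ in the sense of Definition~\ref{d:maximalelement}. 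The same observation ensures that quotienting by a non-atom preserves the full set of atoms, so the chain can be continued for exactly $r$ steps, terminating at an $N_r$ whose non-nil elements consist of $0$ together with the $e-1$ atoms of~$N$.

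For the iterative bound, fix a minimal presentation $\rho_i$ of $N_i$ and let $k_i$ be the number of its trades occurring at $p_i$. Since the factorization sets of all non-nil elements other than $p_i$ are unaffected by the quotient, while trades at $p_i$ become trades at $\infty$ in $N_{i+1}$ and are thus no longer counted, we have $|\rho_{i+1}| = |\rho_i| - k_i$. Combining this with Lemma~\ref{l:lowerbound}, which gives $b(N_{i+1}) \le b(N_i) + k_i + 1$, yields
$$\eta(N_{i+1}) = b(N_{i+1}) + |\rho_{i+1}| \le b(N_i) + |\rho_i| + 1 = \eta(N_i) + 1.$$
Telescoping over $i = 0, \ldots, r-1$ and invoking Theorem~\ref{t:kunzfaces3} produces $\eta(N_r) \le \eta + r$.

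It then remains to compute $\eta(N_r)$ directly. Every sum of two non-zero elements in $N_r$ equals $\infty$ (such a sum cannot equal an atom without violating minimality), so each of the $e$ non-nil elements has a unique factorization, forcing $|\rho_r| = 0$. The minimal elements of $\mathsf Z_{N_r}(\infty)$ are exactly the $\binom{e}{2}$ length-$2$ factorizations $\{e_i + e_j : 1 \le i \le j \le e-1\}$, and the connectivity criterion of Example~\ref{e:lemma56} links two such factorizations only when they coincide (a shared support index would force the remaining atoms on each side to be equal). Each such minimal factorization is therefore its own outer Betti element, so $b(N_r) = \binom{e}{2} = \eta(N_r)$, and the chain $\binom{e}{2} \le \eta + r$ yields the claim.

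The main obstacle is the bookkeeping across the iteration, in particular verifying that a non-atom maximal element always exists while non-atoms remain, that the quotient preserves atoms so the chain actually reaches $N_r$, and that the pair of accounting equalities/inequalities on $|\rho_i|$ and $b(N_i)$ combine cleanly into an additive gain of at most one in $\eta$ per step. The terminal computation of $\eta(N_r)$ is then essentially an application of the outer Betti definition to the trivial case.
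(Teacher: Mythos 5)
Your proof is correct and follows essentially the same route as the paper: quotient the Kunz nilsemigroup by maximal non-atoms one at a time, apply Lemma~\ref{l:lowerbound} at each step, and compute $b(N_r)=\binom{e}{2}$ for the terminal nilsemigroup consisting of $0$, the atoms, and $\infty$. The paper phrases the telescoping slightly differently (tracking $b(N_i)+\sum_{j>i}k_j$ rather than $\eta(N_i)$ directly, using that $|\rho|=\sum_i k_i$), but the bookkeeping is equivalent, and your explicit justifications for the existence of maximal non-atoms and for $\eta(N_r)=\binom{e}{2}$ are details the paper leaves implicit.
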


\begin{proof}
Let $N$ denote the Kunz nilsemigroup of $S$, and let $p_1, \ldots, p_r \in N$ denote the nonzero non-nil non-atoms of $N$, ordered so that each $p_i$ is maximal with respect to divisibility among $p_i, \ldots, p_r$.  Define partly cancellative nilsemigroups $N_0, \ldots, N_r$ so that $N_0 = N$ and $N_i = (N_{i-1})/p_i$ for each $i \ge 1$.  Letting $k_i$ be the number of relations occurring at $p_i$ in $N_{i-1}$ (which coincides with the number of relations occurring at $p_i$ in $N$), Lemma~\ref{l:lowerbound} implies 
\[
\eta(N) = b(N_0) + \sum_{i = 1}^r k_i \ge b(N_1) - 1 + \sum_{i = 2}^r k_i \ge \cdots \ge b(N_r) - r = \binom{e}{2} - r
\]
since $N_r$ consists of $0$, $\infty$, and the atoms of $N$.  Theorem~\ref{t:kunzfaces3} completes the proof.  
% \[\sum_{i=1}^{r} b(p_i) =  \sum_{i=1}^{r}\Big(b(N_{i})-b(N_{i-1})\Big) \leq \sum_{i=1}^{r}n_i + r\]
% by Lemma~\ref{l:lowerbound}. Rearranging, this yields $$b(N)+\sum_{i=1}^{r}n_i \geq b(N_r) - r$$
% The left hand side is $\eta(S)$ by Corollary \ref{big corollary}, while $b(N_r) = {m-r \choose 2}$ by Theorem \ref{max embedding minimal presentation for nilsemigroup}.
\end{proof}

\begin{remark}\label{r:generalnilsemigroups}
Lemma~\ref{l:lowerbound} illustrates another advantage of reformulating questions about minimal presentations in terms of partly cancellative nilsemigroups. 
% The main tool we emphasize in this paper is outer Betti elements, which gives us an easier way to compute minimal presentation cardinality due to \cite{kunzfaces3}. 
Certain operations that can be defined on nilsemigroups--in this case, quotients by maximal elements--are not possible if one is confined to Kunz posets (or numerical semigroups, for that matter).  We will utilize this generality again in Theorem~\ref{t:upperbound} to streamline the proof of upper bounds on $\eta$ for numerical semigroups with small codimension.  
Additionally, as we will see in Sections~\ref{sec:intervalfamily} and~\ref{sec:embdim4}, shedding unnecessary information about the original numerical semigroup in favor of its Kunz nilsemigroup can help streamline arguments that a given numerical semigroup $S$ has a claimed value $\eta(S)$.  
% As an instance of this broader approach, one can see that this reformulation is precisely what allowed the quotient nilsemigroup to inherit outer Betti elements from the original structure in the preceding Lemmas.  
\end{remark}

%%%%%%%%%%%%%%%%%%%%%%%%%%%%%%%%%%%%%%%%%%%%%%%%%%%%%%%%%%%%%%%%%%%%%%%%%
\section{An interval of attainable minimal presentation cardinalities}%%%
\label{sec:intervalfamily}
%!TEX root = minprescard.tex

%\toanyone{The $[\binom{e}{2} - r, \binom{e}{2}]$ family and its proof, along with several examples and pictures}

%\toanyone{I would define the family of numerical semigroups in the text near the start of the section (i.e., outside the Theorem environment), then have a theorem for ``this is the Kunz poset'' and another for ``here is the minimal presentation cardinality''}

In this section, we construct a family of numerical semigroups attaining each minimal presentation cardinality in the interval $[\binom{e}{2}-\min(r, e-1), \binom{e}{2}]$.  This family simultaneously generalizes those in~\cite{rosalesApery} and~\cite{highembdim} using the machinery of Kunz nilsemigroups.  
% and the relative ease of our argument illustrates the utility of Kunz posets for this purpose.  

\begin{example}\label{e:intervalfamily}
In \cite{rosalesApery}, the family of numerical semigroups 
% for any multiplicity and embedding codimension, an archetypal example of a numerical semigroup of minimal presentation cardinality $\binom{e}{2}$ is
\[
S = \< m, m+1, (r+2)m+(r+2), \dots, (r+2)m+(m-1) \>
\]
is introduced to exhibit a numerical semigroup $S$ with $\eta(S) = \binom{e}{2}$ and with any multiplicity $m \ge e$.  The Kunz poset of the above numerical semigroup is nearly identical to the one depicted in Figure~\ref{fig:intervalfamily}, except that each label is replaced with its negation modulo $m$.  
Intuitively, this construction ensures that, just as for max embedding dimension numerical semigroups, there is one outer Betti element for each support set of cardinality at most 2.  Extending to the family in Theorem~\ref{t:intervalfamily}, additional non-atoms are carefully placed to each eliminate one outer Betti element without creating any additional ones.  
% % https://q.uiver.app/?q=WzAsNyxbMiwzLCIwIl0sWzAsMiwiMSJdLFsxLDIsInIrMiJdLFsyLDIsIlxcY2RvdHMiXSxbMywyLCJtLTEiXSxbMCwxLCJcXHZkb3RzIl0sWzAsMCwicisxIl0sWzAsMSwiIiwwLHsiY29sb3VyIjpbMCw2MCw2MF19XSxbMCwyXSxbMCwzXSxbMCw0XSxbMSw1LCIiLDAseyJjb2xvdXIiOlswLDYwLDYwXX1dLFs1LDYsIiIsMCx7ImNvbG91ciI6WzAsNjAsNjBdfV1d
% \[\begin{tikzcd}
% 	{r+1} \\
% 	\vdots \\
% 	1 & {r+2} & \cdots & {m-1} \\
% 	&& 0
% 	\arrow[draw={rgb,255:red,214;green,92;blue,92}, from=4-3, to=3-1]
% 	\arrow[from=4-3, to=3-2]
% 	\arrow[from=4-3, to=3-3]
% 	\arrow[from=4-3, to=3-4]
% 	\arrow[draw={rgb,255:red,214;green,92;blue,92}, from=3-1, to=2-1]
% 	\arrow[draw={rgb,255:red,214;green,92;blue,92}, from=2-1, to=1-1]
% \end{tikzcd}\]
\end{example}

% In the rest of this section, we will generalize this example to construct the desired family of numerical semigroups. First, we make the following crucial observation.

\begin{figure}[t]
\begin{center}
\includegraphics[height=1.5in]{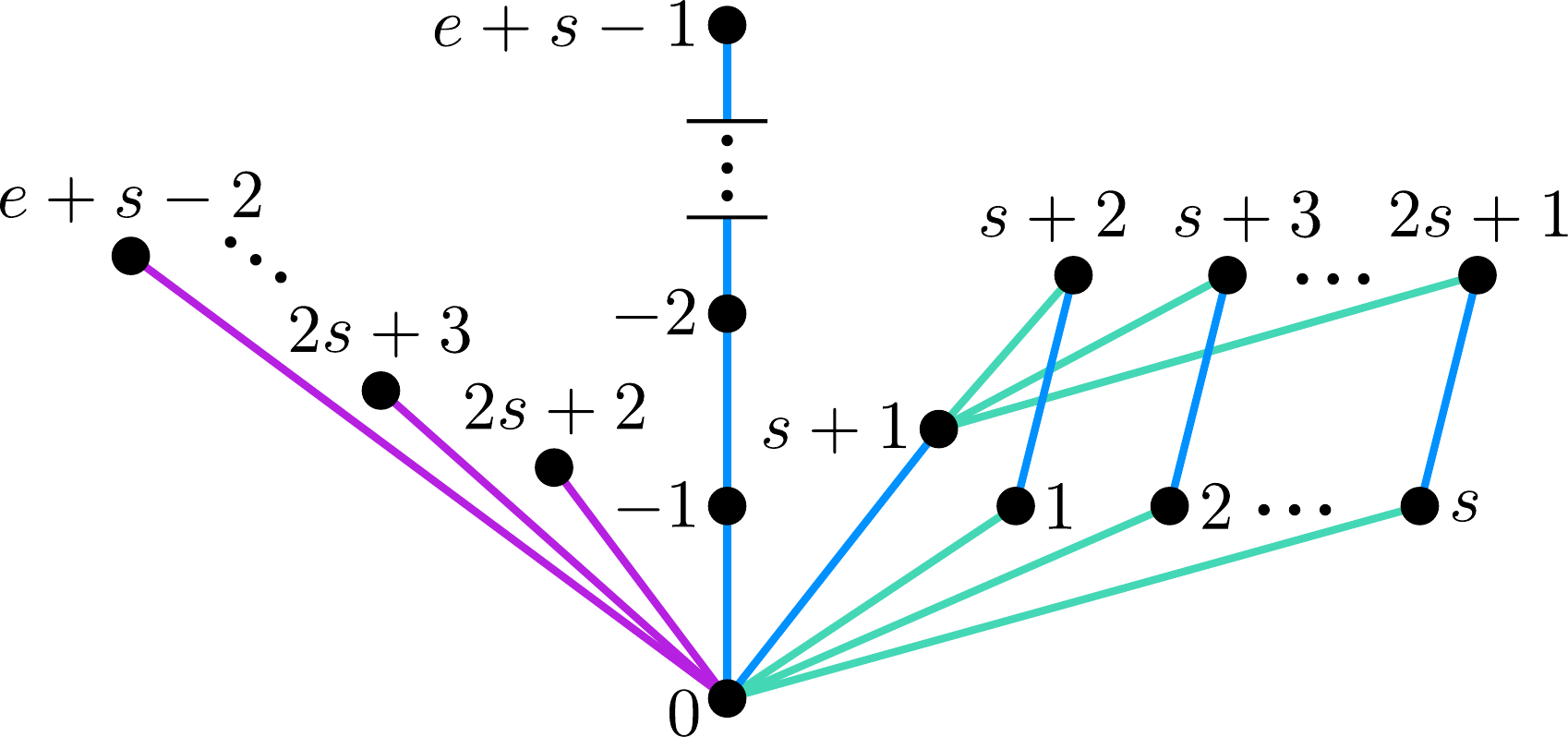}
\hspace{3em}
\includegraphics[height=1.5in]{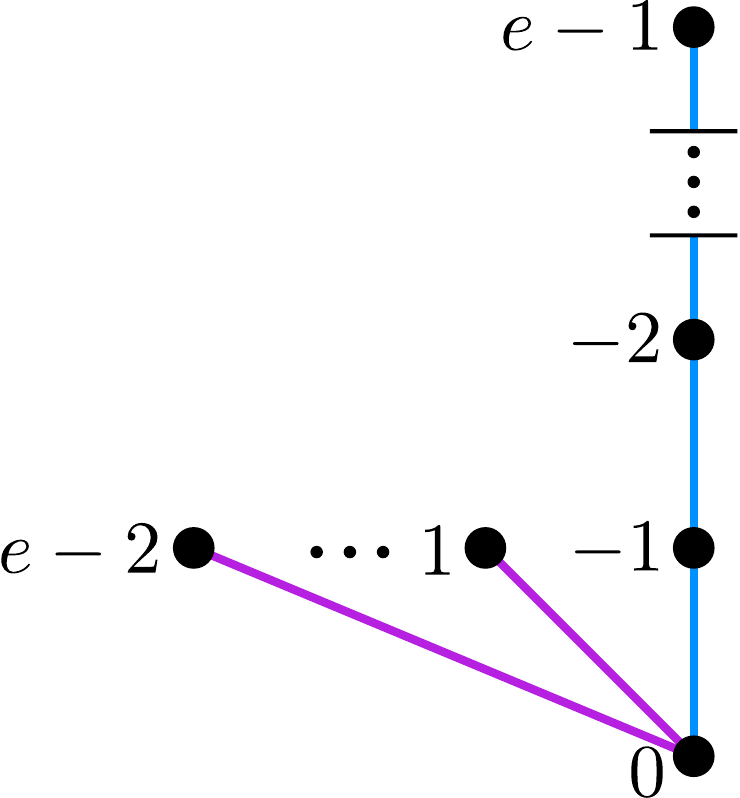}
\end{center}
\caption{The Kunz poset structure of $S$ with $\eta(S) = \binom{e}{2} - s$ from Theorem~\ref{t:intervalfamily} (left) and the case $s = 0$ from Example~\ref{e:intervalfamily} (right).}
\label{fig:intervalfamily}
\end{figure}

\begin{thm}\label{t:intervalfamily}
If $e \ge 4$ and $0 \le s \le \min(e-2,r)$, then there exists a numerical semigroup of embedding dimension $e$ and multiplicity $m = r + e$ such that
\[\eta(S) = \binom{e}{2}-s.\]
In particular, the lower bound in Theorem~$\ref{t:lowerbound}$ is sharp if $r \le e - 1$.  
\end{thm}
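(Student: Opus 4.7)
The plan is to explicitly construct a numerical semigroup $S$ via its Kunz nilsemigroup $N$, realizing the structure depicted on the left of Figure~\ref{fig:intervalfamily}, and apply Theorem~\ref{t:kunzfaces3} to conclude $\eta(S) = b(N) + |\rho| = \binom{e}{2} - s$. The base case $s = 0$ recovers the construction of Example~\ref{e:intervalfamily}, whose Kunz nilsemigroup consists of $e-1$ atoms and a chain of $r$ non-atoms (all multiples of $a_1$), giving $b(N) = \binom{e}{2}$ and $|\rho| = 0$. For $s \ge 1$, I would modify $N$ by introducing $s$ ``merged'' non-atoms: each a single non-atom $p$ having two distinct factorizations as sums of atoms, so that the factorizations of $p$ absorb previously singleton outer Betti elements at the cost of only one relation in $|\rho|$ per merger.

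Concretely, I would define $N$ with atoms $a_1, \ldots, a_{e-1}$ and $r$ non-atoms consisting of a truncated chain $2a_1, 3a_1, \ldots, (r-s+1)a_1$ together with $s$ additional non-atoms each arising with two pair factorizations in a compatible way. The realization of $N$ as the Kunz nilsemigroup of an actual numerical semigroup $S$ can be carried out by choosing generators $m$, $m+1$, and then $e - 2$ further generators whose residues modulo $m$ and numerical sizes force precisely the desired Apéry set structure, interpolating between the Rosales case ($s = 0$) and an interval-style case. A motivating low-dimensional example is $S = \langle 6,7,10,11\rangle$ for $(e,m,s) = (4,6,1)$, where the Apéry element $21 = 3\cdot 7 = 10+11$ is the merged non-atom and one verifies directly that $\eta(S) = 5 = \binom{4}{2} - 1$.

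The crux of the proof is verifying $b(N) + |\rho| = \binom{e}{2} - s$ via the combinatorial description of outer Betti elements from Example~\ref{e:lemma56}. Each of the $s$ merged non-atoms contributes one relation to $|\rho|$ and simultaneously causes the removal of \emph{multiple} outer Betti elements from $b(N)$ -- typically both a pair outer Betti element corresponding to one of its factorizations and a ``chain-top'' outer Betti element such as $\{(r+2) e_1\}$ from the Rosales case, the latter now failing condition~(i) of the outer Betti element definition because its $B - e_1$ would need to equal $\mathsf Z_N(p)$ which now has more than one factorization. The main obstacle is precisely this bookkeeping: one must check that the net change to $b(N) + |\rho|$ per merger is exactly $-1$, and that no new outer Betti elements are introduced from longer minimal factorizations of $\infty$ created by the merger. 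The Kunz nilsemigroup framework, combined with the algorithmic characterization from Example~\ref{e:lemma56}, makes this case analysis tractable and avoids the need to construct a minimal presentation of $S$ directly.

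Finally, the sharpness claim follows by taking $s = \min(r, e-2)$ in the construction, which gives $\eta(S) = \binom{e}{2} - \min(r, e-2)$; this coincides with the lower bound $\binom{e}{2} - r$ of Theorem~\ref{t:lowerbound} whenever $r \le e - 2$, establishing sharpness across that range.
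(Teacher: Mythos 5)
Your framework is the right one (build a Kunz nilsemigroup with the desired poset, count outer Betti elements and relations, invoke Theorem~\ref{t:kunzfaces3}), and your worked example $\langle 6,7,10,11\rangle$ with $\eta=5$ checks out, but there is a genuine gap: the general construction is never actually produced. Everything after ``I would define $N$ with \dots\ $s$ additional non-atoms each arising with two pair factorizations in a compatible way'' defers exactly the content that constitutes the proof. Writing down generators ``whose residues modulo $m$ and numerical sizes force precisely the desired Ap\'ery set structure'' is the technical heart of the argument --- the paper's generators carry coefficients such as $(2r-2s+3)m+(s+1)$ chosen so that every unwanted sum of generators overshoots $\max\Ap(S)$ --- and without them there is nothing to verify. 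Moreover, your mechanism is more delicate than you suggest: partial cancellativity forces the two factorizations of a merged non-atom to have disjoint supports, so arranging $s$ independent identities of the form $a_i+a_j=a_k+a_\ell$ (or one side equal to a chain element) among only $e-1$ atoms, without creating unintended coincidences, extra relations, or new outer Betti elements, is an additive-bases-type problem; in particular the ``chain-top'' cancellation you invoke in your example can only be spent once, since there is a single chain. You also do not address the extreme case $s=e-2$ (the paper handles it separately by a gluing, since its interval construction requires $s\le e-3$), and your closing sentence only yields sharpness for $r\le e-2$, whereas the statement asserts it for $r\le e-1$, so a residual case remains unaccounted for.

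For contrast, the paper's construction avoids merged non-atoms entirely: every Ap\'ery set element has a \emph{unique} factorization, so $|\rho|=0$ and $\eta(S)=b(N)$. The reduction by $s$ comes from placing $s$ non-atoms $a_{i+s+1}=n_i+n_{s+1}$, each absorbing exactly one length-$2$ factorization out of $\mathsf Z_N(\infty)$ at the cost of zero relations, while the chain of multiples of $n_{-1}$ absorbs $2n_{-1}$ and returns one outer Betti element at its top, giving $1+\binom{e}{2}-(s+1)=\binom{e}{2}-s$. Your ledger ($-2$ outer Betti elements, $+1$ relation per merger) can produce the same total, but each entry in it requires a separate verification that your proposal explicitly acknowledges as ``the main obstacle'' and then omits.
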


\begin{proof}
If $s = e - 2$, then $m = r + e \ge 2e - 2$.  Since $T = \<e-1, e, \ldots, 2e - 3\>$ has max embedding dimension, $\eta(T) = \binom{e-1}{2}$.  As~such, for any prime $q > m$, we see $S = qT + m\ZZ_{\ge 0}$ is a valid gluing since $m \in T$ is not a minimal generator, and
$$\eta(S) = \binom{e-1}{2} + 1 = \binom{e}{2} - (e - 2).$$
We now turn to the case where $s \le \min(e-3, r)$.  

Let $I$ denote the interval $[2s+2, e+s-2]$, and consider
$$
S = \<m, 4m - 1, (2r - 2s + 3)m + (s + 1), (2r - 2s + 4)m + i, (4r - 4s + 5)m + j : i \in [1, s], j \in I \>.
% \qquad \text{where} \qquad
% n_i = (2r - 2s + 2)m + i.
$$
In what follows, we will prove $S$ has the Kunz poset depicted in Figure~\ref{fig:intervalfamily} and identify its Ap\'ery set elements $a_i$ for $i \in \ZZ_m$.  
First, let $a_0 = 0$ and $a_i = n_i$ denote the generator of $S$ with $n_i \equiv i \bmod m$.  
Since $s \le e - 3$, $|I| = e - s - 3 \ge 0$, and since $s \le r$, each $n_i > 3m$.  
Letting 
$$
a_{i+s+1} = n_i + n_{s+1} = (4r - 4s + 7)m + (i+s+1)
\qquad \text{for each} \qquad
i \in [1, s],
$$
we see 
$$a_{e+s-2} = \begin{cases}
(4r - 4s + 7)m + (e + s - 2) & \text{if } |I| = 0; \\
(4r - 4s + 5)m + (e + s - 2) & \text{if } |I| > 0.
\end{cases}$$
This means that
$$a_{-j} = \ell n_{-1} = \ell(4m - 1) \in \Ap(S)
\qquad \text{for each} \qquad
\ell \in [0, r - s + 1],$$
as each is exceeded by any other sum of two $n_i$'s, but 
$$(r - s + 2)n_{-1} = (4r - 4s + 9)m + (e + s - 2) > a_{e+s-2}.$$
We claim $\Ap(S) = \{a_i : i \in \ZZ_m\}$, and that each $a_i$ has a unique factorization.  Indeed, any non-negative integer combination of 2 or more generators of $S$ that does not exceed $a_{2s+1} = \max \Ap(S)$ cannot involve $n_i$ with $i \in I$, and cannot involve more than one $n_j$ with $j \in [1,s]$, so one can then check $n_i + n_{-1} > n_{i-1}$ for each $i \in [1,s+1]$ and 
$$2n_{s+1} = (4r - 4s + 6)m + (2s + 2) > a_{2s+2} = (4r - 4s + 5)m + (2s + 2).$$

Having now proven $S$ has the Kunz poset depicted in Figure~\ref{fig:intervalfamily}, it remains to count outer Betti elements, which in this case each consist of a single factorization in $\mathsf Z_N(\infty)$ that is minimal with respect to the coordinate-wise partial order.  There are $s + 1$ elements of $\Ap(S)$ with a factorization of length 2, so the remaining $\binom{e}{2} - (s + 1)$ length 2 factorizations are minimal elements of $\mathsf Z_N(\infty)$.  Any other minimal $z \in \mathsf Z_N(\infty)$ has length at least 3, but cannot contain more than one of any generator except $n_{-1}$ since (i) $2n_i \notin \Ap(S)$ for each $n_i \ne n_{-1}$ and (ii) among any 3 distinct generators, there are 2 whose sum lies outside of $\Ap(S)$.  As such, the only minimal $z$ with length at least~3 is a multiple of $n_{-1}$, so
$$\eta(S) = 1 + \binom{e}{2} - (s + 1) = \binom{e}{2} - s$$
by Theorem~\ref{t:kunzfaces3}.  
\end{proof}

\section{A partial upper bound on minimal presentation cardinality}%%%%%%
\label{sec:upperbound}
%!TEX root = minprescard.tex

%\begin{lemma}\label{size of outer betti is one}
%If every $p\in N\setminus \{\infty\}$ has a unique factorization, then each outer Betti element of $N$ contains a single element.
%\end{lemma}

%\begin{proof}
%Let $z = (z_1,\dots,z_k)$ and $z'=(z_1',\dots,z_k')$ be any two factorizations in an outer Betti element $B$ of $N$. Then there exists $i\in \supp(z)\cap \supp(z')$ with $z-e_i,z'-e_i \in Z_N(p)$ for some $p\in N\setminus \{\infty\}$. By assumption, $p$ has a unique factorization and therefore $z - e_i=z' - e_i$, which in turn implies $z=z'$.
%\end{proof}

In this section, we present a sharp upper bound on minimal presentation cardinality of numerical semigroups of embedding codimension at most $3$.  

\begin{remark}\label{r:upperboundcite}
For embedding codimension at most 2, the bounds we present here also appeared in~\cite{highembdim}, as did a correct conjecture of the upper bound for embedding codimension 3.  Their conjecture was accompanied by a remark about how a proof with the same techniques would require ``a big amount of cases and subcases''.  
We~include a full proof here of the bounds proved in~\cite{highembdim} that avoids such casework, to contrast the use of Kunz nilsemigroup machinery with that of the original manuscript.  
\end{remark}

\begin{thm}\label{t:upperbound}
Suppose $N$ is a partly cancellative nilsemigroup with embedding codimension $r$.  If $r \le 2$, then $\eta(N) \le \binom{e}{2}$, and if $r = 3$, then $\eta(N) \le \binom{e}{2} + 1$.  As such, if $S$ is a numerical semigroup with embedding codimension $r$, then $\eta(S) \le \binom{e}{2}$ if $r \le 2$, and $\eta(S) \le \binom{e}{2} + 1$ if $r = 3$.  
\end{thm}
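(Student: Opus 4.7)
The plan is to induct on the codimension $r$, using Lemma~\ref{l:lowerbound} in the forward direction. The base case $r = 0$ is the classical result: a max embedding dimension nilsemigroup has $\eta(N) = \binom{e}{2}$, as its only minimal factorizations of $\infty$ are the $\binom{e}{2}$ length-two monomials in its $e-1$ atoms, each a singleton outer Betti element. For the inductive step, order the non-atoms $p_1, \dots, p_r$ so that each $p_i$ is maximal in the iterated quotient $N_{i-1} := N/p_1/\cdots/p_{i-1}$.

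Rearranging Lemma~\ref{l:lowerbound} as $\eta(N_{i-1}) - \eta(N_i) = b_{p_i} - 1$, where $b_{p_i}$ denotes the number of outer Betti elements of $N_{i-1}$ divisible by $p_i$, and telescoping yields
\begin{equation*}
\eta(N) \;=\; \binom{e}{2} + \sum_{i=1}^{r}\bigl(b_{p_i}-1\bigr).
\end{equation*}
Since the left side is order-independent, so is the sum on the right, and the theorem reduces to the combinatorial bound $\sum_{i}(b_{p_i}-1) \le 0$ for $r \le 2$ and $\le 1$ for $r = 3$ for some (hence any) compatible ordering.

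The core of the argument is a structural analysis of outer Betti elements $B$ of $N_{i-1}$ divisible by a maximal $p = p_i$. Maximality forces $\mathsf Z(p) + e_j \subseteq \mathsf Z(\infty)$ for every atom index $j$, so $B$ must contain a ``slab'' $\mathsf Z(p) + e_j$ for the witness $j$. The outer Betti axiom at each index $i' \in \supp(\mathsf Z(p))$ then forces $p - a_{i'} + a_j$ to be non-nil, with its factorization set equal to $\mathsf Z(p - a_{i'}) + e_j$. A partial-cancellation argument shows that, for each fixed non-atom $p$ examined in isolation, at most one slab produces a ``pure'' outer Betti element $B = \mathsf Z(p)+e_j$; any further outer Betti element divisible by $p$ must be ``merged,'' meaning it additionally satisfies $B - e_{j'} = \mathsf Z(p')$ for some distinct non-atom $p' \neq p$ and some index $j'$.

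This immediately handles $r = 1$: no $p'$ is available, so $b_p \le 1$, giving $\eta \le \binom{e}{2}$. For $r = 2$, after quotienting a maximal $p_1$, the residual analysis at $p_2$ is the codim-$1$ case ($b_{p_2} \le 1$), while the axioms at $p_1$ restrict any merged contribution to involve $p_2$, and one checks these simultaneous configurations are incompatible, giving $b_{p_1} \le 1$ as well. For $r = 3$, merged configurations involving two of the three non-atoms genuinely can produce $b_{p_i} = 2$, but a case analysis must show that at most one such merged contribution appears across the telescoping, keeping $\sum(b_{p_i}-1) \le 1$. The principal obstacle is this codim-$3$ verification: one must enumerate how the three non-atoms can pairwise cooperate within a single outer Betti element, and rule out two independent merged contributions of ``order~$2$'' at distinct $p_i$'s in the same nilsemigroup. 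Once this nilsemigroup bound is established, the statement for numerical semigroups follows immediately from Theorem~\ref{t:kunzfaces3}.
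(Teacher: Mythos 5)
Your framework is the same one the paper uses---quotient by maximal non-atoms, apply Lemma~\ref{l:lowerbound}, and bound the number of outer Betti elements divisible by each non-atom---but the actual content of the proof is absent: for $r=2$ you assert that ``one checks these simultaneous configurations are incompatible,'' and for $r=3$ you explicitly defer the verification, calling it ``the principal obstacle.'' That verification \emph{is} the theorem; as written the proposal is a plan, not a proof. Moreover, one intermediate claim you do commit to is false. Take $N$ with atoms $a_1,\dots,a_{e-1}$, non-nil non-atoms $p=2a_1$ and $q=a_1+a_2$, and every other sum nil; this is partly cancellative of codimension $2$, and both $p$ and $q$ are maximal. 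Then $\{3e_1\}$ and $\{2e_1+e_2\}$ are both outer Betti elements of $N$ divisible by $p$ (for the latter, $B-e_1=\{e_1+e_2\}=\mathsf Z_N(q)$ and $B-e_2=\{2e_1\}=\mathsf Z_N(p)$, and $2a_1+a_2=\infty$ by maximality of $p$). So choosing $p_1=p$ gives $b_{p_1}=2$, contradicting your claimed bound $b_{p_1}\le 1$; here a ``pure'' and a ``merged'' outer Betti element divisible by $p$ coexist, which is exactly the configuration you claim is impossible. The telescoped sum is still $0$ in this example, because $b_q=0$ in $N/p$, but this shows the per-element bound $b_{p_i}\le 1$ is order-dependent and therefore the wrong invariant: your argument neither specifies a good ordering nor proves that one exists.

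The paper sidesteps all of this by not telescoping through every non-atom for the upper bound. It selects a single maximal element $p$ (chosen according to the case), proves that at most one (for $r\le 2$) or at most two (for $r=3$) outer Betti elements of $N$ are divisible by that $p$, concludes $\eta(N)\le\eta(N/p)$ or $\eta(N)\le\eta(N/p)+1$, and then invokes the codimension-$(r-1)$ bound on $N/p$ as a black box. The two structural inputs are: (a) since the support of an element contains the supports of its divisors, at most one non-nil element admits a factorization of coordinate sum at least $3$, and such an element can only divide outer Betti elements of the form $\{ce_i\}$; and (b) when all non-atoms have coordinate-sum-$2$ factorizations, connectivity of $\nabla_B$ forces $\supp(B)$ to equal the union of the supports of the non-atoms dividing $B$, which pins $|\supp(B)|$ to $2$ or $3$ and forces the relevant factorization sets to be singletons. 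To salvage your route you would need to prove that some ordering realizes $\sum_i(b_{p_i}-1)\le 0$ (resp.\ $\le 1$), which appears no easier than the paper's single-quotient induction.
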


\begin{proof}
Let $n_1, \ldots, n_e$ denote the atoms of $N$.  
If $r = 1$, the factorizations of the only nonzero non-nil non-atom $p \in N$ all have coordinate sum 2 and pairwise disjoint support, meaning an outer Betti element $B$ can only be divisible by $p$ if $B = \{3e_i\}$ for some $i$.  As such, $\eta(N) \le \eta(N/p) = \binom{e}{2}$ by Lemma~\ref{l:lowerbound}.  
% the outer Betti elements of $N$ and factorizations of $p$ have distinct support sets with cardinality at most 2, so $\eta(N) \le \binom{e}{2}$.  
% In this case, $\eta(N) = \binom{e}{2}$, and in all other cases, $b(N) = \binom{e}{2} - |\mathsf Z_N(p)|$ and thus $\eta(N) = \binom{e}{2}$ if $|\mathsf Z_N(p)| = 1$ and $\eta(N) = \binom{e}{2} - 1$ otherwise.  

Next, suppose $r = 2$, and let $p, q \in N$ denote the nonzero non-nil non-atoms.  If~$q$ has a factorization with coordinate sum 3, then it is the only non-nil element of $N$ with this property.  As such, the only outer Betti element of $N$ that $q$ can divide is one of the form $\{4e_i\}$ for some $i$, so Lemma~\ref{l:lowerbound} implies $\eta(N) \le \eta(N/q) \le \binom{e}{2}$.  
% If $e_i + e_j + e_k \in \mathsf Z_N(q)$ for some $i, j, k$, then either $q - n_i = q - n_j = q - n_k = p$, in which case partial cancellativity implies $i = j = k$ and thus $\eta(N) = b(N) = \binom{e}{2}$, or $q - n_i = q - n_j = p$ and $q - n_k$ is an atom, .  
If, on the other hand, neither $p$ nor $q$ has a factorization with coordinate sum 3, then any outer Betti element $B$ with a coordinate sum 3 factorization $z$
% $e_i + e_j + e_k \in B$ with $i, j, k$ not all equal 
with $|\supp(z)| \ge 2$ 
must have $z - e_i \in \mathsf Z(p)$ or $z - e_i \in \mathsf Z(q)$ for each $i \in \supp(z)$.  By the connectivity of $\nabla_B$, we must have $\supp(B) = \supp(p) \cup \supp(q)$, which in particular means $|\supp(B)| = 2$.  This forces $|\mathsf Z_N(p)| = |\mathsf Z_N(q)| = |B| = 1$, so we have $\eta(N) = \binom{e}{2} - 1$.  
% Now, $i$, $j$, and $k$ cannot all be distinct as otherwise either $p$ or $q$ would have 2 of the factorizations $e_i + e_j$, $e_i + e_k$, and $e_j + e_k$.  As such, every factorization in $B$ has support 2. 
In all other cases, $p$~and $q$ each divide at most one outer Betti element, so $\eta(N) \le \eta(N/p) \le \binom{e}{2}$.  
% outer Betti elements are all singleton and have distinct support sets, so $\eta(N) \le \binom{e}{2}$.  

Lastly, suppose $r = 3$, let $p, q, t \in N$ denote the nonzero non-nil non-atoms. 
As~a consequence of partial cancellativity, the support set of any element of $N$ must contain the support sets of its divisors, so if $p$ has a factorization with coordinate sum at least 3, then it is the only element of $N$ with this property.  This means $p$ can only divide outer Betti elements of the form $\{4e_i\}$ for some $i$, and Lemma~\ref{l:lowerbound} implies $\eta(N) \le \eta(N/p) \le \binom{e}{2}$.  As such, assume all factorizations of $p$, $q$, and $t$ have coordinate sum 2.  
If some outer Betti element $B$ is divisible by $p$, $q$, and $t$, then the connectivity of $\nabla_B$ implies $|\supp(B)| = 3$, but this forces 
$\mathsf Z_N(p) = \{e_i + e_j\}$, $\mathsf Z_N(q) = \{e_i + e_k\}$, $\mathsf Z_N(t) = \{e_j + e_k\}$, and $B = \{e_i + e_j + e_k\}$.  This means $B$ is the only outer Betti element divisible by $p$, so again by Lemma~\ref{l:lowerbound} we are done.  On the other hand, if an outer Betti element $B$ is divisible by $p$ and $q$ but not $t$, then by the argument in the second half of the preceding paragraph, $|\mathsf Z_N(p)| = |\mathsf Z_N(q)| = |B| = 1$.  This means at most~2 outer Betti elements are divisible by $p$, so $\eta(N) \le \eta(N/p) + 1 \le \binom{e}{2} + 1$.  
\end{proof}

\begin{remark}\label{r:generalnilsemigroups2}
Theorem~\ref{t:upperbound} illustrates another advantage of reformulating questions about $\eta$ in terms of finite partly cancellative nilsemigroups.  In this case, the quotient construction introduced in Definition~\ref{d:maximalelement} consolidates much of the casework seen in the argument in~\cite{highembdim} for $r \in [0,2]$.  One may also notice from the proof of Theorem~\ref{t:upperbound} that the value of $\eta(S)$ seems to be maximized when every element of $\Ap(S)$ has a unique factorization, which in turn forces all outer Betti elements of the Kunz nilsemigroup~$N$ to be singletons; we revisit this idea in Conjecture~\ref{conj:uniqueaperyupperbound}.  
\end{remark}

\begin{example}\label{e:extrabettifamily}
The core of the argument in the proof of Theorem~\ref{t:upperbound} for $r \le 2$ is that in almost all cases, outer Betti elements, together with the factorizations appearing in relations at non-nil elements of $N$, must have distinct support sets with cardinality at most 2.  When $r = 3$, this need no longer be the case.  For example, $S = \<7, 15, 17, 33\>$, whose Kunz poset is depicted in Figure~\ref{fig:extrabettifamily}, has outer Betti elements 
$$
B_2 = \{(0,2,1,0)\}
\qquad \text{and} \qquad
B_3 = \{(0,1,2,0)\}
$$
with identical support.  
\end{example}

\begin{figure}[t]
\begin{center}
\includegraphics[height=1.5in]{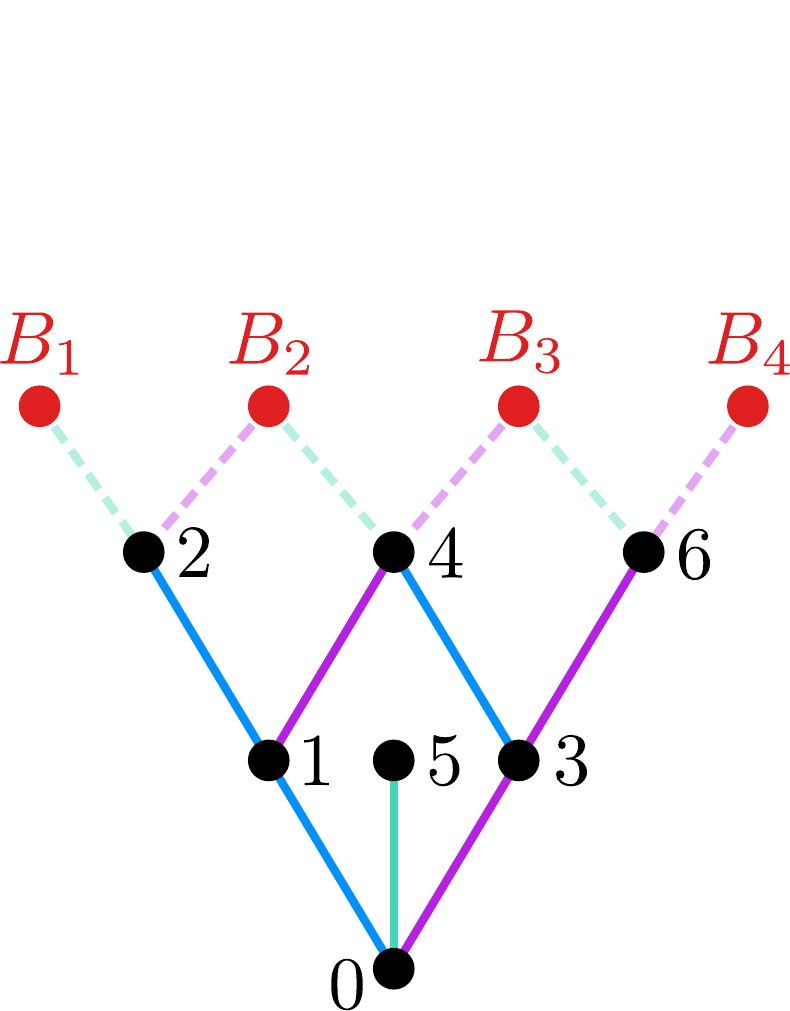}
\hspace{3em}
\includegraphics[height=1.5in]{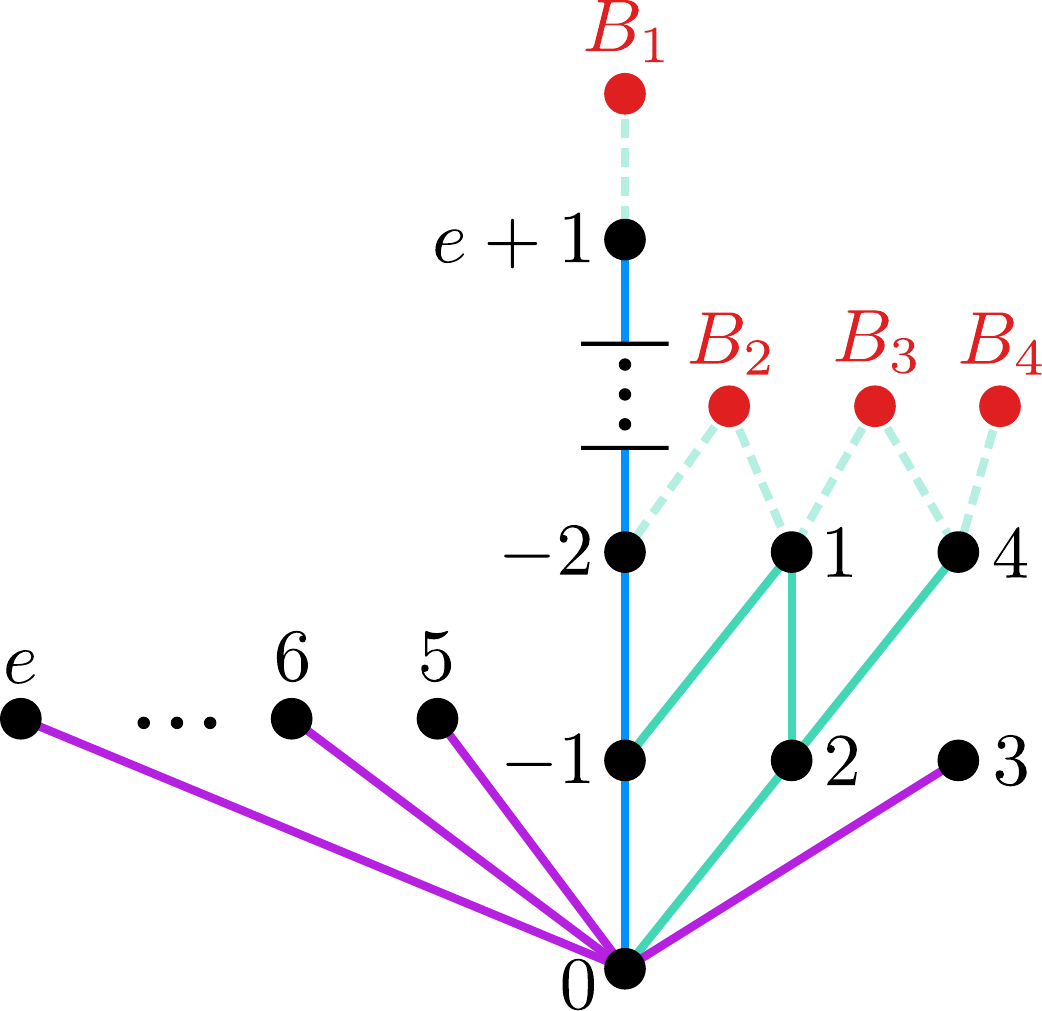}
\end{center}
\caption{The Kunz poset structure of the numerical semigroup from Example~\ref{e:extrabettifamily} (left) and that of $S$ with $\eta(S) = \binom{e}{2} + 1$ from Theorem~\ref{t:extrabettifamily} (right).}
\label{fig:extrabettifamily}
\end{figure}

The following theorem assumes $e \ge 5$, as a slightly different argument is needed for $e = 4$.  The latter case will be handled in Section~\ref{sec:embdim4}, as part of a much larger family presented in Theorem~\ref{t:embdim4}.  

\begin{thm}\label{t:extrabettifamily}
If $e \ge 5$ and $r \ge 3$, then there exists a numerical semigroup $S$ with embedding dimension $e$ and multiplicity $m = e + r$ such that $\eta(S) = \binom{e}{2} + 1$.  
\end{thm}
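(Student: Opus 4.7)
The plan is to generalize Example~\ref{e:extrabettifamily} by designing, for each $e \ge 5$ and $r \ge 3$, a numerical semigroup $S$ with multiplicity $m = e + r$ whose Ap\'ery set contains two ``moderate'' atoms $\alpha \equiv 1$ and $\beta \equiv 3 \pmod m$ such that $a_2 = 2\alpha$, $a_4 = \alpha + \beta$, and $a_6 = 2\beta$ all appear in $\Ap(S)$ as non-atoms with unique factorizations. This mimics the triangle $\{2\cdot 15,\, 15+17,\, 2\cdot 17\}$ of Example~\ref{e:extrabettifamily}, and guarantees that the two length-three factorizations $2\alpha + \beta$ and $\alpha + 2\beta$---which share the support $\{\alpha, \beta\}$---are both minimal factorizations of $\infty$, yielding two distinct outer Betti elements with identical support. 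This is the source of the extra ``$+1$'' in $\eta = \binom{e}{2} + 1$.

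Concretely, we take $\alpha = m+1$ and $\beta = m+3$, and fill the remaining residues with atoms whose values grow quickly enough to prevent accidental coincidences among pair sums. For $r = 3$ we place atoms at residues $\{1, 3\} \cup \{5, 7, 8, \ldots, e+1\} \cup \{-1\}$ with non-atom residues $\{2, 4, 6\}$; for $r \ge 4$ we introduce $r-3$ additional non-atoms as multiples of a large atom $n_{-1} \equiv -1 \pmod m$. A single boundary case ($e = 5$, $r = 4$) exhibits a residue collision between $\alpha + 2\beta$ and $2 n_{-1}$ and is treated separately by introducing an atom at residue $7$ and using $\alpha + n_7$ as the fourth Ap\'ery non-atom at residue $8$.

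Once the Ap\'ery elements are confirmed to have unique factorizations, Theorem~\ref{t:kunzfaces3} yields $\eta(S) = b(N)$. The singleton minimal factorizations of $\infty$ in $N$ fall into three classes: first, the length-two pair sums of atoms that do not land in $\Ap(S)$, numbering $\binom{e}{2} - 3$ when $r = 3$ and $\binom{e}{2} - 4$ when $r \ge 4$; second, the four length-three ``special'' singletons $3\alpha$, $2\alpha+\beta$, $\alpha+2\beta$, $3\beta$, each of whose two-component subfactorizations is one of the Ap\'ery non-atoms $2\alpha, \alpha+\beta, 2\beta$; and third, when $r \ge 4$, the single ``stack-top'' factorization $(r-1)\, n_{-1}$. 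Adding these gives $\binom{e}{2} + 1$ in every case.

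The main obstacle is verifying these counts rigorously across all $(e, r)$. Two subtleties must be handled: (i) ensuring the chosen generators actually produce the prescribed Ap\'ery set, which requires showing that no smaller element of $S$ sits in any given residue class---this follows from elementary inequalities on the numerical values of $\alpha, \beta$ and the remaining atoms; and (ii) ruling out any additional minimal factorization of $\infty$ that might inadvertently arise from coincidences such as $n_i + n_j = n_k + n_l$ or from some $\alpha + n_j \in \Ap(S)$ outside the designed triangle. Both are controlled by letting the large atoms grow rapidly enough to separate all pair sums, and the only place where a non-uniform construction is required is the small boundary case mentioned above.
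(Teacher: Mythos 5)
Your high-level mechanism is exactly the one the paper uses: arrange for a pair of atoms $x,y$ whose three pairwise sums $2x$, $x+y$, $2y$ all lie in $\Ap(S)$ with unique factorizations, so that three length-two outer Betti elements are traded for four length-three ones (two of which, $2x+y$ and $x+2y$, share a support), netting $\eta = \binom{e}{2}+1$. Your accounting ($\binom{e}{2}-3+4$ for $r=3$, $\binom{e}{2}-4+4+1$ for $r\ge 4$) is also correct in principle. But the concrete construction you propose does not survive the verification you defer. The difficulty is that every non-multiplicity atom must be the \emph{minimum} of $S$ in its residue class, so your atoms at residues $5,7,8,\dots,e+1$ cannot ``grow quickly'': they are pinned below $2\alpha+\beta = 3m+5$, $\alpha+2\beta=3m+7$, and the cascading sums $\alpha+n_i$, $\beta+n_i$, hence all lie in roughly $[m,3m]$. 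This is fatal for your $n_{-1}$-stack when $r\ge 4$: every stack residue $-j$ with $j\in[2,r-2]$ equals a sum of two atom residues from $\{1,3,5,7,\dots,e+1\}$, so $S$ contains an element of size at most about $4m$ in that class, while $jn_{-1}\ge 2jm-j$. Already for $j=2$ one gets collisions such as $\alpha+n_{e+1}\equiv -2$ competing with $2n_{-1}=4m-2$ (this afflicts essentially all $e$ with $r=4$, not just your single flagged boundary case $(e,r)=(5,4)$), and for $j\ge 3$ the stack element $jn_{-1}\ge 6m-3$ simply cannot be in $\Ap(S)$, so the construction fails outright for all $r\ge 5$. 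A secondary, smaller point: coincidences $n_i+n_j=n_k+n_l$ with disjoint index pairs do \emph{not} threaten the count (see Example~\ref{e:outerbettinotbetti}); what must be excluded is a second factorization of an element of $\Ap(S)$, which is a different and more delicate condition.

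The paper's construction avoids this tension by choosing the triangle to be $\{n_2, n_{-1}\}$ itself, i.e.\ $S=\<m,\,2m-1,\,(r-1)m+2,\,(2r-2)m+i : i\in\{3\}\cup[5,e]\>$, so that the three Ap\'ery pair-sums are $2n_{-1}$, $n_2+n_{-1}$, $2n_2$ and the $n_{-1}$-stack \emph{is} one side of the triangle (its top $\{re_{-1}\}$ being one of the four extra outer Betti elements). All remaining atoms sit at the uniform height $(2r-2)m+i$, strictly above the entire stack $(r-1)(2m-1)$, so no sum involving them can re-enter $\Ap(S)$ and no cascade of constraints arises. If you want to salvage your two-moderate-atom variant, you would need either to restrict it to $r=3$ (where it does work, e.g.\ $\<8,9,11,21,15\>$ for $e=5$) or to abandon the prescribed stack and instead track how the Ap\'ery elements at residues $-2,\dots$ actually get realized by sums of moderate atoms --- which reopens exactly the case analysis your write-up claims to avoid.
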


\begin{proof}
Consider the numerical semigroup
$$
S = \<m, 2m - 1, (r - 1)m + 2, (2r - 2)m + i : i \in \{3\} \cup [5, e] \>,
$$
and let $a_i = n_i$ denote the generator of $S$ with $n_i \equiv i \bmod m$.  
For each $j \in [0, r - 1]$, we have $a_j = jn_{-1} = 2jm - j \in \Ap(S)$, while
$$
rn_{-1} = (2r - 1)m + e > a_e.
$$
The remaining elements of $\Ap(S) = \{a_i : i \in \ZZ_m\}$ are 
$$ 
a_1 = n_2 + n_{-1} = (r+1)m + 1
\qquad \text{and} \qquad
a_4 = 2n_2 = (2r - 2)m + 4.
$$
as each equivalence class modulo $m$ is accounted for, any non-negative integer combinations of 2 or more generators involving $n_3$ or $n_i$ with $i \in [5,e]$ exceeds $a_e = \max \Ap(S)$, $2n_2 + n_{-1} > a_3$, and $n_2 + 2n_{-1} \equiv 0 \bmod m$.  This proves $S$ has the Kunz poset depicted in Figure~\ref{fig:extrabettifamily}, whose outer Betti elements are $B_1, \ldots, B_4$ along with any coordinate sum 2 factorization involving $n_3$ or $n_i$ with $i \in [5,e]$.  By Theorem~\ref{t:kunzfaces3}, $\eta(S) = \binom{e}{2} + 1$.  
\end{proof}

\section{Minimal presentation cardinalities in embedding dimension 4}%%%%
\label{sec:embdim4}
%!TEX root = minprescard.tex

We now turn our attention to more thoroughly characterizing achievable minimal presentation cardinalities for $e = 4$. 

\begin{remark}\label{r:eta3}
In Section~\ref{sec:intervalfamily}, we demonstrated a family of numerical semigroups which achieves any minimal presentation cardinality in the range $[\binom{e-1}{2}+1, \binom{e}{2}] = [4, 6]$.  Using methods from~\cite{wilfmultiplicity} and~\cite{kunzfaces3}, one~can verify computationally that there are no numerical semigroups with $m = 7$ and $\eta = 3$, and Proposition~\ref{p:eta3} ensures there are achievable such numerical semigroups for any $m \ge 8$.  This~completely characterizes which values of $\eta \le 6$ are attained by a numerical semigroup for $e = 4$ and every $m$.  
\end{remark}

\begin{prop}\label{p:eta3}
For any $m \geq 8$, there is a numerical semigroup $S$ with $m(S) = m$, $e(S) = 4$, and $\eta(S) = 3$.
\end{prop}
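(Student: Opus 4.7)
The plan is to exhibit $S$ as a gluing of $\<4, 5, 6\>$ with $\ZZ_{\ge 0}$, with $m$ playing the role of the scalar multiplier on $\ZZ_{\ge 0}$.  The gluing identities for $\mathsf e$ and $\eta$ recalled in Section~\ref{sec:overview} will then yield the required invariants directly.

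The preliminary observation is that $\<4, 5, 6\>$ is itself a valid gluing of $\ZZ_{\ge 0}$ with $\<2, 3\>$: taking $a = 2 \in \ZZ_{\ge 0}$ (a non-atom, as $2 \ne 1$) and $a' = 5 \in \<2, 3\>$ (a non-atom, as $5 = 2 + 3$), with $\gcd(2, 5) = 1$, one has $5\ZZ_{\ge 0} + 2\<2, 3\> = \<4, 5, 6\>$.  By the gluing identities, $\mathsf e(\<4, 5, 6\>) = 1 + 2 = 3$ and $\eta(\<4, 5, 6\>) = 0 + 1 + 1 = 2$.  A short calculation shows that $\<4, 5, 6\>$ has Frobenius number $7$, so every $m \ge 8$ lies in $\<4, 5, 6\>$, and being greater than $6$, it is strictly larger than each of the atoms; hence $m$ is a non-atom of $\<4, 5, 6\>$ whenever $m \ge 8$.

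Given $m \ge 8$, choose any $a' \ge \lceil m/4 \rceil$ with $\gcd(a', m) = 1$; such an $a'$ exists because any $m$ consecutive integers contain at least one coprime to $m$, and automatically $a' \ge 2$, so $a'$ is a non-atom of $\ZZ_{\ge 0}$.  Setting $S = a'\<4, 5, 6\> + m\ZZ_{\ge 0} = \<4a', 5a', 6a', m\>$ produces a valid gluing of $\<4, 5, 6\>$ and $\ZZ_{\ge 0}$ by the non-atoms $m$ and $a'$ with $\gcd(m, a') = 1$.  The gluing identities give $\mathsf e(S) = 3 + 1 = 4$ and $\eta(S) = 2 + 0 + 1 = 3$, while $\mathsf m(S) = m$ because $4a' \ge m$ ensures $m$ is the smallest of the four generators (minimality of the other three follows from $\gcd(a', m) = 1$ with $a' \ge 2$, since none of $4a', 5a', 6a'$ can be combined with multiples of $m$ to cover another, and $m$ itself is not a multiple of $a'$).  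The only mild subtlety is the existence of the auxiliary integer $a'$, which is routine; the main conceptual ingredient is the identification of $\<4, 5, 6\>$ as the right $e = 3$ complete intersection, whose Frobenius number is small enough that every $m \ge 8$ is automatically a non-atom.
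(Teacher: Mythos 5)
Your proof is correct and follows essentially the same route as the paper: both realize $S$ as a gluing of $T = \langle 4,5,6\rangle$ with $\ZZ_{\ge 0}$ using the non-atom $m \in T$ (valid since the Frobenius number of $T$ is $7$) and a coprime scalar, then apply the gluing identities for $\mathsf e$ and $\eta$. The only cosmetic differences are that the paper takes the scalar to be a prime larger than $m$ where you take any sufficiently large integer coprime to $m$, and that you additionally justify $\eta(\langle 4,5,6\rangle)=2$ via a sub-gluing.
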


\begin{proof}
Note that the numerical semigroup $T = \<4,5,6\>$ has $m \in T$ as a non-atom.  Let $a$ be any prime greater than $m$. Then $4a > m$ and $m$ does not divide any of $4a$, $5a$, or $6a$.  The gluing $S = m\ZZ_{\ge 0} + aT$ then has $m(S) = m$, $e(S) = 1 + e(T) = 4$ and $\eta(S) = \eta(T) + 1 = 3$, as desired. 
\end{proof}

Having fully characterized $\eta \le 6$, we now turn our attention to $\eta \ge 6$.

\begin{figure}[t]
\begin{center}
\includegraphics[height=2.25in]{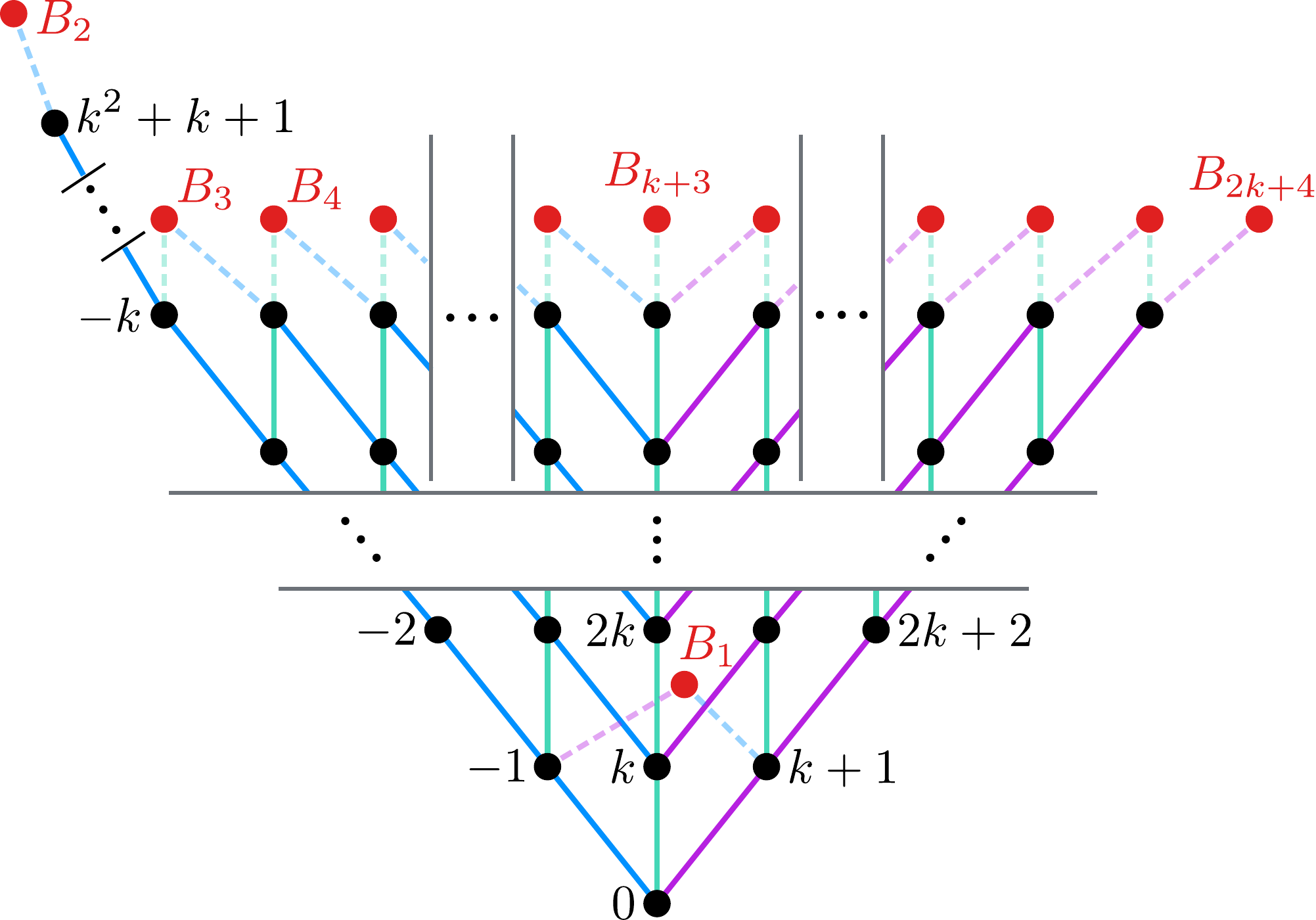}
\hspace{-1em}
\includegraphics[height=2.25in]{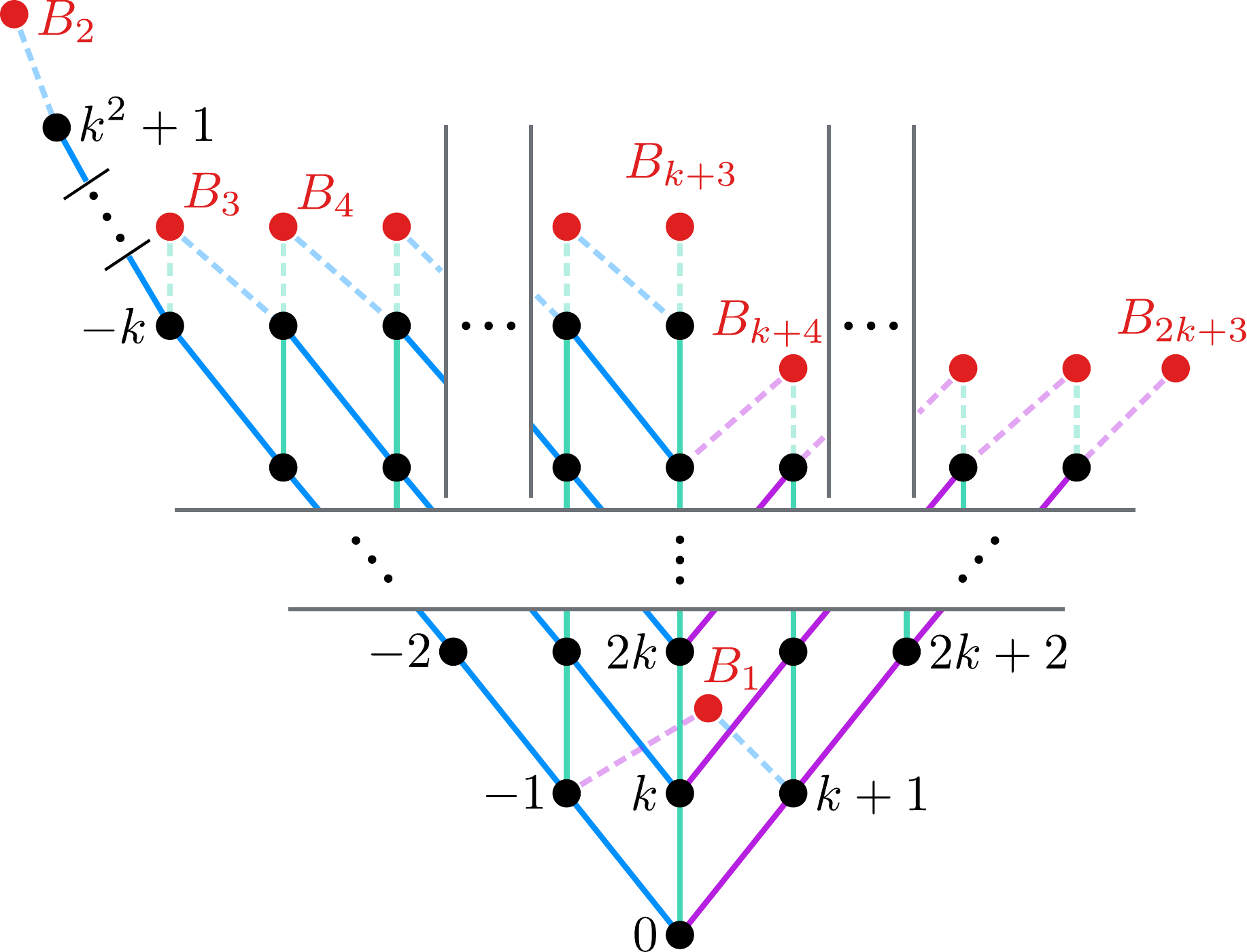}
\end{center}
\caption{The Kunz poset structure for $S$ with $\eta(S) = 2k + 4$ (left) and $\eta(S) = 2k + 3$ (right) in the proof of Theorem~\ref{t:embdim4}.}
\label{fig:embdim4}
\end{figure}

\begin{thm}\label{t:embdim4}
For any $\eta \geq 6$ and $m \in \mathbb{N}$ with $4m \geq (\eta-2)^2$, there exists a numerical semigroup $S$ with $e(S) = 4$, $\eta(S) = \eta$, and $m(S) = m$.
\end{thm}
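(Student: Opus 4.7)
The plan is to split on the parity of $\eta$, writing either $\eta = 2k + 4$ (with $k \ge 1$) or $\eta = 2k + 3$ (with $k \ge 2$), and for each case construct an explicit family of numerical semigroups whose Kunz poset matches the corresponding diagram in Figure~\ref{fig:embdim4}. Mirroring the strategy used in Theorems~\ref{t:intervalfamily} and~\ref{t:extrabettifamily}, I would define $S = \<m,\ 2m - 1,\ c_1 m + r_1,\ c_2 m + r_2\>$ with $n_{-1} = 2m - 1$ playing the role of a ``tower'' generator and the coefficients $c_1, c_2$ chosen of order $k$ so that the Apéry set contains the long chain $a_{-j} = j(2m - 1)$ for $j \in [0, k]$, together with a small number of additional elements fitted to produce the rest of the depicted Kunz poset. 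The hypothesis $4m \ge (\eta - 2)^2$ translates in the two cases to $m \ge (k+1)^2$ or $m \ge k^2 + k + 1$, which is exactly what is needed to guarantee that $(k+1)(2m - 1)$ overshoots the largest listed Apéry element, so that the tower terminates at level $k$ and the claimed $\Ap(S)$ is correct.

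The proof would then proceed in three steps. First, verify that $\Ap(S)$ is exactly the claimed set by checking that every nontrivial non-negative combination of generators either reduces to $0 \bmod m$ or exceeds $\max \Ap(S)$. Second, enumerate the outer Betti elements of the Kunz nilsemigroup $N$ together with the trades at non-nil Apéry elements with multiple factorizations, using the algorithmic characterization from~\cite[Lemma~5.6]{kunzfaces3} recalled in Example~\ref{e:lemma56}. Third, apply Theorem~\ref{t:kunzfaces3} to conclude $\eta(S) = b(N) + |\rho| = \eta$. In the even case I expect each of the $k - 1$ interior tower non-atoms to contribute a single trade, while the side generators and the tower endpoints produce the remaining outer Betti elements; in the odd case a slight modification at the top of the tower shifts the tally by one, accounting for the parity difference.

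The main obstacle I anticipate is the careful bookkeeping in the second step, specifically dealing with non-singleton outer Betti elements that sit above tower non-atoms with two factorizations, in the spirit of $B_3$ from Example~\ref{e:outerbettis} for $S_3 = \<10,22,23,24\>$. For each minimal factorization of the nil, one must identify its connected component in the graph of Example~\ref{e:lemma56} and then verify condition~(i) of the outer Betti element definition component-by-component, ensuring no component is double-counted or missed. Once this accounting is complete, the count $\eta(S) = \eta$ follows from Theorem~\ref{t:kunzfaces3}, and together with the fact that the tower length is forced to be at most $k$, the construction shows the bound $4m \ge (\eta - 2)^2$ is essentially saturated.
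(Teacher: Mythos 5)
There is a genuine gap here: your write-up is a plan rather than a proof, and the plan as stated misses the key gadget that makes the theorem work. With $e=4$ the Kunz nilsemigroup has only three atoms, and the whole difficulty is producing $\eta = 2k+4$ (or $2k+3$) outer Betti elements from so few atoms. The paper does this with a \emph{triangular grid}: the third and fourth generators are chosen congruent to $k$ and $k+1$ modulo $m$, so that the $(k+1)^2$ factorizations $(0,j,i-j,0)$ and $(0,0,i-j,j)$ with $0 \le j \le i \le k$ land in distinct Ap\'ery classes, each with a unique factorization; the $\approx 2(k+1)$ minimal nil-factorizations along the boundary of this triangle are exactly the outer Betti elements that give $\eta$ its size. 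Your sketch --- a tower of length $k$ in the direction of $n_{-1}=2m-1$ plus ``a small number of additional elements'' --- contains no such mechanism, and cannot: the Ap\'ery set must have exactly $m$ elements, so when $m \gg k^2$ the bulk of it must sit in a chain of length on the order of $m$ (in the paper this is the tower $\{j\,n_{-1} : k < j < m-k^2-k\}$ of the generator $(k+1)m-1$, whose coefficient grows with $k$ precisely so that this long tower stays inside $\Ap(S)$), while a length-$k$ tower plus $O(1)$ extra elements accounts for neither the $m$ Ap\'ery classes nor the $2k+4$ outer Betti elements.

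Relatedly, your explanation of the hypothesis $4m \ge (\eta-2)^2$ is not correct: it is not there to force the $n_{-1}$-tower to terminate at level $k$ (in the paper that tower is long, not short), but to guarantee that the triangle of $(k+1)^2$ unique-factorization elements fits inside the $m$-element Ap\'ery set, i.e.\ $m \ge (k+1)^2$ in the even case. Finally, the difficulty you anticipate about non-singleton outer Betti elements does not arise in a correct construction: in the paper every element of $\Ap(S)$ has a unique factorization, so $|\rho|=0$, every outer Betti element is a singleton minimal factorization of the nil, and Theorem~\ref{t:kunzfaces3} reduces the count to a direct enumeration of boundary lattice points. To repair your argument you would need to (i) specify generators realizing the triangular grid, (ii) verify the $m$ listed factorizations lie in distinct classes modulo $m$, and (iii) check that each proposed boundary factorization maps outside $\Ap(S)$ --- which is essentially the paper's proof.
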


\begin{proof}
We proceed by cases, based on the parity of $\eta$.  First, suppose $\eta = 2k + 4$ for some $k \in \ZZ_{\ge 1}$, so that $m \geq (k+1)^2$.  We claim the Kunz nilsemigroup of
$$S = \left\langle m,(k+1)m - 1,  (m-k^2-k)m +  k, (m-k^2-k)m +  k+1\right\rangle$$
is the one whose poset $P$ is depicted in Figure~\ref{fig:embdim4}.  More specifically, we claim each element of $\Ap(S)$ has a unique factorization, each lying in the set 
$$A = \{(0, j, i-j, 0), (0, 0, i-j, j) : 0 \le j \le i \le k\} 
\cup \{(0, j, 0, 0) : k < j < m - k^2 - k\}.$$
As there are $2i + 1$ elements of $A$ with coordinate sum $i \le k$, it is easy to check that
$$|A| = \sum_{i=0}^{k} (2i+1) + (m - k^2 - 2k - 1) = m.$$
Additionally, each $\varphi(z)$ lies a distinct equivalence class modulo $m$ for each $z \in A$.  Indeed, if $(0,a,0,0), (0,0,b,c) \in A$ with $\varphi(0,a,0,0) \equiv \varphi(0,0,b,c) \bmod m$, then 
$$\varphi(0,0,b,c) \equiv k(b + c) + c \bmod m
\qquad \text{and} \qquad
\varphi(0,a,0,0) \equiv -a \bmod m,$$
the right hand sides of which lie in $[0, k^2 + k]$ and $[-m + k^2 + k + 1, 0]$, respectively, ensuring $a = b = c = 0$.  Meanwhile, if $(0,a,b,0), (0,0,0,c) \in A$ with $b > 0$ satisfy $\varphi(0,a,b,0) \equiv \varphi(0,0,0,c) \bmod m$, then 
$$kb - a \equiv \varphi(0,a,b,0) \equiv \varphi(0,0,0,c) \equiv kc + c \bmod m$$
necessitates $kb - a = kc + c$, and thus either (i) $b = c + 1$ and $a + c = k$, which is impossible since then $a + b = a + c + 1 = k + 1$, or (ii) $b = c + 2$ and $a = c = k$, which is impossible since then $b = k + 2$.  

It remains to show any $z \in \ZZ_{\ge 0}^4 \setminus A$ with first coordinate 0 satisfies $\varphi(z) \notin \Ap(S)$.  It suffices to assume $z$ is minimal in $\ZZ_{\ge 0}^4 \setminus A$ under the componentwise partial order, and thus corresponds to a proposed outer Betti element $B_i$ of $P$ depicted in Figure~\ref{fig:embdim4}.  
% which in particular means either $z = (0,1,0,1)$, $z = (0, m - k^2-k, 0, 0)$, $z = (0, k + 1 - i, i, 0)$ with $1 \leq i \leq k + 1$, or $z = (0, 0,k + 1 - i, i)$ with $1 \leq i \leq k + 1$.  
Comparing equivalence classes modulo $m$, one can then readily check:\ if $i = 1$, then $z = (0,1,0,1)$ and
$$\varphi(0,1,0,1) = (m - k^2 + 1)m + k > (m - k^2 - k)m + k = \varphi(0,0,1,0);$$
if $i = 2$, then $z = (0, m - k^2-k, 0, 0)$ and 
\begin{align*}
    \varphi(0, m - k^2 - k, 0, 0) 
    &= ((m - k^2 - k)(k+1) - 1)m + k^2 + k \\
    % &> (m -k^2-k)((k+1)m-1) - ((m-k^2-k-1)m+k) \\
    &> k(m-k^2-k)m +  k^2 + k = \varphi(0, 0, 0, k);
\end{align*}
if $3 \le i \le k + 3$, then $z = (0, k+1-b, b, 0)$ with $b = i - 2 \in [1, k+1]$ and 
$$\varphi(0, k + 1 - b, b, 0) \ge \varphi(0, 0, b, 0) > \varphi(0, 0, 0, b - 1);$$
and if $k + 4 \le i \le 2k + 4$, then $z = (0, 0, k + 1 - c, c)$ with $c = i - k - 3 \in [1, k+1]$ and
\begin{align*}
    \varphi(0,0,k+1-c,c) 
    % &= (k+1-c)((m-k^2-k)m + k) + c((m-k^2-k)m +  k+1) \\
    &= (k+1)(m-k^2-k)m + k^2 + k + c \\
    &> ((k+1)(m -k^2-k-c) - 1)m + k^2 + k + c \\
    &= (m -k^2-k-c)((k+1)m-1) = \varphi(0,m - k^2-k - c,0,0).
\end{align*}
One may now simply count the outer Betti elements of $P$ to obtain $\eta(S) = 2k + 4$.

%WARNING: The below has only been double checked (as opposed to quadriple)

Next, suppose $\eta = 2k+3$ for $k \in \ZZ_{\ge 2}$, so that $m \ge k^2+k+1$.  We claim the Kunz nilsemigroup of $$S = \left\langle m, km - 1,   (m - k^2-1)m +  k, (m - k^2 -1)m +  k+1\right\rangle$$ is the one whose poset $P$ is depicted in Figure~\ref{fig:embdim4}. More specifically, we claim each element of $\Ap(S)$ has a unique factorization, each lying in the set 
\begin{align*}
A &= \{(0, j, i-j, 0) : 0 \le j \le i \le k\}
\cup \{(0, 0, i-j, j): 1 \le j \le i \le k-1\} \\
& \qquad \cup \{(0, j, 0, 0) : k < j \le m - k^2 -1\}.
\end{align*}
As there are $2i+1$ elements of $A$ with coordinate sum $i \leq k-1$, we see
$$|A| = \sum_{i=0}^{k-1} (2i+1) + (k + 1) + (m-k^2-k - 1) = m.$$ Additionally, we see that $\varphi(z)$ lies is a distinct equivalence class modulo $m$ for each $z \in A$. Indeed, if $(0,a,0,0), (0,0,b,c) \in A$ with $\varphi(0,a,0,0) \equiv \varphi(0,0,b,c) \bmod m$, then 
$$\varphi(0,0,b,c) \equiv k(b + c) + c \bmod m
\qquad \text{and} \qquad
\varphi(0,a,0,0) \equiv -a \bmod m,$$
the right hand sides of which lie in $[0, k^2]$ and $[-m + k^2 + 1, 0]$, respectively, thereby ensuring $a = b = c = 0$. Meanwhile, if $(0,a,b,0), (0,0,0,c) \in A$ with $b > 0$ satisfy $\varphi(0,a,b,0) \equiv \varphi(0,0,0,c) \bmod m$, then 
$$kb - a \equiv \varphi(0,a,b,0) \equiv \varphi(0,0,0,c) \equiv kc + c \bmod m$$
necessitates $kb - a = kc + c$, and thus either (i) $b = c + 1$ and $a + c = k$, which is impossible since then $a + b = a + c + 1 = k + 1$, or (ii) $b = c + 2$ and $a = c = k$, which is impossible since then $b = k + 2$.  

Proceeding as before, it remains to show that for each proposed outer Betti element $B_i = \{z\}$ of $P$, we have $\varphi(z) \notin \Ap(S)$.  
If $i = 1$, then $z = (0,1,0,1)$ and
$$\varphi(0,1,0,1) = (m - k^2 + k + 1)m + k > (m - k^2-1)m +  k = \varphi(0,0,1,0);$$
if $i = 2$, then $z = (0,m - k^2,0,0)$ and 
\begin{align*}
    \varphi(0, m - k^2, 0, 0) 
    &= (km - k^3 - 1)m + k^2 
    > k(m-k^2-1)m +  k^2 = \varphi(0, 0, 0, k);
\end{align*}
if $3 \le i \le k + 2$, then $z = (0, k+1-b, b, 0)$ with $b = i - 2 \in [1, k+1]$ and 
$$\varphi(0, k + 1 - b, b, 0) \ge \varphi(0, 0, b, 0) > \varphi(0, 0, 0, b - 1);$$
if $i = k+3$, then $z = (0,0,k+1,0)$ and 
\begin{align*}
    \varphi(0,0, k+1,0) &= (k+1)(m - k^2 - 1)m + k^2 + k\\
    &> (km - k^3 - k^2 - 1)m + k^2 + k = \varphi(0,m-k^2-k,0,0);
\end{align*}
and if $k+4 \le i \le 2k+3$, then $z = (0,0,k-c,c)$ with $c = i - k - 3 \in [1,k]$ and 
\begin{align*}
    \varphi(0,0,k-c,c) &= (m-k^2+k)km + k^2 + c\\
    &> (km - k^3 - ck - 1)m + k^2 + c\\
    &= (m - k^2 - c)(km - 1) = \varphi(m - k^2 - c,0,0).
\end{align*}
Counting the outer Betti elements of $P$ yields $\eta(S) = 2k + 3$, as desired.
\end{proof}

%%%%%%%%%%%%%%%%%%%%%%%%%%%%%%%%%%%%%%%%%%%%%%%%%%%%%%%%%%%%%%%%%%%%%%%%%
\section{Some open questions}\label{sec:openquestions}%%%%%%%%%%%%%%%%%%%
\label{sec:extras}
%!TEX root = minprescard.tex

As noted in the introduction, we have verified computationally that every value of~$\eta$ attained by a numerical semigroup $S$ with $e = 4$ and $m \le 42$ is accounted for by the families presented in this manuscript.  As such, we conjecture the following.  
% Note, though, that for embedding dimension 4 and minimal presentation cardinality $\eta$, there are not achievable multiplicities with $4m < (\eta-2)^2$ i.e. $m < \left\lceil \frac{(\eta-2)^2}{4}\right\rceil$ for $\eta \leq 10$ depicted in Figure [blah]. Using the methods in \cite{wilfmultiplicity,kunzfaces3}, we verified this holds for $\eta \leq 14$. As such, we raise the following conjecture:

\begin{conj}\label{conj:embdim4}
For $e = 4$, the families of numerical semigroups in Theorems~\ref{t:intervalfamily} and~\ref{t:embdim4} and Proposition~\ref{p:eta3} attain all possible values of $\eta$ for each $m \ge 4$.  
% For embedding dimension $4$ and minimal presentation cardinaltity $\eta \geq 6$, the minimal achievable multiplicity is $\left\lceil \frac{(\eta-2)^2}{4}\right\rceil$.
\end{conj}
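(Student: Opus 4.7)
The plan is to prove the converse: for each $m \ge 4$, every achievable $\eta$ with $\mathsf e(S) = 4$ is realized by one of the three families.  Combined with Theorems~\ref{t:intervalfamily} and~\ref{t:embdim4} and Proposition~\ref{p:eta3}, the conjecture reduces to establishing two assertions:\ (i)~if $\mathsf e(S) = 4$ and $\eta(S) = 3$, then $\mathsf m(S) \ge 8$; and (ii)~if $\mathsf e(S) = 4$ and $\eta(S) \ge 7$, then $4\mathsf m(S) \ge (\eta(S) - 2)^2$.

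Assertion~(i) should be tractable.  Since $\eta = e - 1 = 3$ forces $S$ to be a complete intersection~\cite{completeintersection}, $S$ decomposes as a gluing of two smaller numerical semigroups, themselves complete intersections.  A~direct case check on the possible gluing patterns with $\mathsf m(S) \le 7$ rules out each value of $\mathsf m(S)$ in that range, mirroring the computational enumeration already verified for $m \le 42$.

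Assertion~(ii) is the principal obstacle.  It is most naturally phrased via the Kunz nilsemigroup $N$ of $S$, which has $3$ atoms and $m$ non-nil elements; by Theorem~\ref{t:kunzfaces3} the task reduces to the nilsemigroup-theoretic inequality $(\eta(N) - 2)^2 \le 4(|N| - 1)$ for any $3$-atom partly cancellative nilsemigroup $N$.  We would attempt an induction on $\eta(N)$ using the quotient-by-maximal-element construction of Definition~\ref{d:maximalelement}.  A short derivation from Lemma~\ref{l:lowerbound} yields the bookkeeping formula $\eta(N/p) = \eta(N) + 1 - d_p$, where $d_p$ denotes the number of outer Betti elements of $N$ divisible by the maximal element $p$.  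The inductive step requires selecting a maximal element $p$ so that the quadratic bound is preserved in $N/p$; in the extremal case this demands $d_p \ge 2$ while the fiber $\varphi_N^{-1}(p)$ remains small.

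The main obstacle is this inductive step.  The constructions of Theorem~\ref{t:embdim4} suggest that extremal examples correspond to Ap\'ery sets forming triangular downward-closed slices of $\ZZ_{\ge 0}^3$, with outer Betti elements distributed along two bounding faces of the slice.  A~successful induction will likely require either a geometric invariant capturing the ``width'' of such a slice---quantifying how many non-nil elements accompany each outer Betti element along the boundary---or a direct classification of extremal $3$-atom nilsemigroups whose pair $(\eta(N), |N|)$ saturates $4(|N|-1) = (\eta(N)-2)^2$, for which the Kunz nilsemigroups arising from Theorem~\ref{t:embdim4} are the natural candidates.  Making rigorous the intuition that each new outer Betti element forces a ``row'' of non-nil factorizations, while simultaneously tracking the non-singleton fibers corresponding to interior Betti elements, is the crux of the argument.
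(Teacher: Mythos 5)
This statement is a conjecture: the paper gives no proof of it, only computational verification for $m \le 42$, so there is no ``paper proof'' to match. Your proposal is likewise not a proof but a reduction plus an acknowledged open step, and the gap it leaves is exactly the open content of the conjecture. On the positive side, your reduction is sound: the forward inclusions are Theorems~\ref{t:intervalfamily} and~\ref{t:embdim4} and Proposition~\ref{p:eta3}, and your assertion~(i) is in fact already a consequence of the paper's Proposition~\ref{p:completeintersection} (since $\eta = e-1 = 3$ forces $S$ to be a complete intersection, hence $m \ge 2^{3} = 8$), so no gluing case check is needed. Your bookkeeping identity $\eta(N/p) = \eta(N) + 1 - d_p$ also follows correctly from Lemma~\ref{l:lowerbound}.

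The genuine gap is assertion~(ii), and the induction you sketch cannot close in the form stated. Writing $m$ for the number of non-nil elements, a quotient by a single maximal element $p$ decreases $m$ by exactly $1$, so the available slack in the target inequality is $4(m-1) - 4(m-2) = 4$; but if $d_p \ge 2$ then $(\eta(N)-2)^2 - (\eta(N/p)-2)^2 = 2(d_p-1)(\eta(N/p)-2) + (d_p-1)^2$, which already exceeds $4$ whenever $\eta(N/p) \ge 4$. Hence a one-element-at-a-time induction works only if one can always locate a maximal element with $d_p \le 1$, and you give no argument that such an element exists (in the extremal triangular examples of Theorem~\ref{t:embdim4} the corner elements do divide two outer Betti elements). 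Your alternative suggestion---removing an entire ``row'' at once and showing each new outer Betti element costs roughly $\sqrt{m}$ non-nil elements---is plausibly the right shape of argument, but it is precisely the unproven crux, and as you note it also requires controlling non-singleton fibers. Until that step is supplied, the conjecture remains open.
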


We suspect that such a complete answer for $e \ge 5$ will be much more difficult, as maximizing the number of outer Betti elements for Kunz nilsemigroups with a given number of atoms is similar to problems in the field of additive bases, wherein tight bounds are notoriously difficult to obtain in general~\cite{newupperboundfiniteadditivebases,extremalbasesfinitecyclic,upperboundfiniteadditive2bases}.  

Turning our attention to arbitrary $e$, in light of the lower bound in Theorem~\ref{t:lowerbound}, the following is a natural question.  

\begin{question}\label{q:etalarge}
For fixed $e$ and $m$, what is the largest $\eta$ can be?  
\end{question}

The following would be a good first step towards Question~\ref{q:etalarge}.  

\begin{conj}\label{conj:uniqueaperyupperbound}
For fixed $e$ and $m$, the largest possible value of $\eta$ is achieved by a numerical semigroup $S$ in which every element of $\Ap(S)$ has a unique factorization.  In this case, each outer Betti element of the Kunz nilsemigroup $N$ is a singleton, and $\eta(S)$ equals the number of outer Betti elements of $N$.  
% is each pair of embedding dimension and multiplicity, there is a numerical semigroup $S$ of maximal minimal presentation cardinality such that $S$ has $\eta(S)$ outer Betti elements, or equivalently the Kunz poset of $S$ contains no inner relations.
\end{conj}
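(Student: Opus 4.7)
The second assertion is nearly immediate from definitions. If each $a_i \in \Ap(S)$ has a unique factorization, then $|\mathsf Z_N(p)| = 1$ for every non-nil $p$ in the Kunz nilsemigroup $N$. I would first observe that every outer Betti element $B$ is then a singleton: for any $z, z' \in B$ and any $i \in \supp(z) \cap \supp(z')$, condition~(i) of the outer Betti definition forces $z - e_i = z' - e_i$ and hence $z = z'$, after which connectivity of $\nabla_B$ collapses all of $B$ to a single point. Next, the minimal presentation $\rho$ of $N$ is empty, since any trade $z \sim z'$ with $\varphi_N(z) \ne \infty$ would produce two distinct factorizations of a single non-nil element. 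Theorem~\ref{t:kunzfaces3} then yields $\eta(S) = b(N)$.

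For the main assertion, my plan is an exchange argument at the level of finite partly cancellative nilsemigroups. Among all such $N$ with $|N| = m+1$ and $e-1$ atoms achieving the maximum value of $\eta$, I would choose one minimizing $\sum_{p \in N \setminus \{0,\infty\}} |\mathsf Z_N(p)|$. Suppose for contradiction that some non-nil $p$ has $|\mathsf Z_N(p)| \ge 2$; pick such a $p$ that is maximal under divisibility. Letting $k \ge 1$ be the number of relations in a minimal presentation of $N$ occurring at $p$ and $c_p$ the number of outer Betti elements of $N$ divisible by $p$, Lemma~\ref{l:lowerbound} rewrites as $\eta(N) = \eta(N/p) - 1 + c_p$. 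The aim is then to construct a replacement $N'$ by quotienting $p$ out and re-introducing a new maximal element $q$ whose sole factorization is a judiciously chosen minimal element of $\mathsf Z_{N/p}(\infty)$ lying at one of the outer Betti locations that used to be divisible by $p$. If the construction preserves $|N'| = m+1$ and the atom count, reduces the total factorization count, and satisfies $\eta(N') \ge \eta(N)$, we contradict extremality.

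The principal obstacle is the bookkeeping for outer Betti elements under the exchange $N \rightsquigarrow N/p \rightsquigarrow N'$. Re-introducing $q$ can alter condition~(i) by changing which factorization sets $\mathsf Z(p')$ exist, and it can create or break edges in the graph $G$ of Example~\ref{e:lemma56} used to detect outer Betti components. A careful case analysis is needed based on whether the factorizations in $\mathsf Z_N(p)$ share support with one another and with existing outer Betti elements, and one must track the connectivity of $\nabla_B$ for each candidate component. A secondary obstacle is realizability: one must check that the constructed $N'$ is the Kunz nilsemigroup of an actual numerical semigroup, not merely an abstract partly cancellative nilsemigroup, for which I would appeal to the realizability results from~\cite{kunzfaces1,kunzfaces2,kunzfaces3} or, failing that, exhibit an explicit numerical-semigroup construction in the spirit of Theorems~\ref{t:intervalfamily} and~\ref{t:embdim4}. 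If this exchange strategy proves too delicate, a backup plan is to bound $|\rho|$ directly against the number of outer Betti elements sacrificed when factorizations of $\infty$ fuse along interior edges of $G$, though establishing such an enumerative inequality would likely require additional structural lemmas.
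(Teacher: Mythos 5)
The statement you are addressing is stated in the paper as Conjecture~\ref{conj:uniqueaperyupperbound}; the paper offers no proof of it, so the only thing to compare against is the surrounding discussion. Your treatment of the second assertion is correct and coincides with what the paper already records in Example~\ref{e:outerbettis} and Remark~\ref{r:generalnilsemigroups2}: unique factorizations of Ap\'ery set elements give $|\mathsf Z_N(p)| = 1$ for every non-nil $p$, condition~(i) in the definition of outer Betti element then forces each $B$ to be a singleton, the minimal presentation of $N$ is empty, and Theorem~\ref{t:kunzfaces3} yields $\eta(S) = b(N)$. That part is fine (and is essentially definitional, as you say).

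The main assertion, however, is not proved by your proposal; it remains exactly as open as before. The decisive step of your exchange argument --- constructing $N'$ from $N/p$ by re-inserting a maximal element $q$ and verifying $\eta(N') \ge \eta(N)$ --- is never carried out, and the identity $\eta(N) = \eta(N/p) - 1 + c_p$ from Lemma~\ref{l:lowerbound} gives you no leverage on it: after inserting $q$ at a former outer Betti location you lose that outer Betti element, and the number of \emph{new} outer Betti elements divisible by $q$ (which is what you would need to dominate $c_p - 1$) depends on global support and connectivity data that the lemma does not control. You acknowledge this yourself as ``the principal obstacle,'' but that obstacle is the entire content of the conjecture; without it the argument is a research plan, not a proof. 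A second unresolved gap is realizability: the conjecture quantifies over numerical semigroups, and it is not known in general which finite partly cancellative nilsemigroups (or even which posets) arise as Kunz nilsemigroups, so even a successful nilsemigroup-level exchange would leave you needing to realize $N'$ by an actual numerical semigroup of multiplicity $m$; the references you cite do not supply a blanket realizability theorem of that kind. In short: the second sentence of the statement is verified, the first sentence is not, and the proposal should be regarded as an (honestly flagged) outline of a possible attack rather than a proof.
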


As identified at the end of the introduction, a consequence of the results in this paper is that an answer to the following question would yield, for each~$m$, a completely characterization of the attainable values of $\eta$ across all embedding dimensions.  
% Note that numerical semigroups satisfying the first inequality in Question~\ref{q:smallcodimupper} are said to possess \emph{high embedding dimension} [TODO].  

\begin{question}\label{q:smallcodimupper}
If $r \in [4, e]$, which values of $\eta$ are attained with $\binom{e}{2} + 2 \le \eta \le \binom{e}{2} + e - r$?  
\end{question}

On the other hand, the lower bound in Theorem~\ref{t:lowerbound} is known to be sharp for $m < 2e$ by the family in Theorem~\ref{t:intervalfamily}, but Figure~\ref{fig:achieved} indicates it can also be sharp for larger $m$.  

\begin{question}\label{q:lowerbound}
For which $m$ and $e$ is the lower bound in Theorem~\ref{t:lowerbound} sharp?  
% For which multiplicity $m$ and embedding dimension $e$ does there exist a complete intersection numerical semigroup?  
\end{question}

If $\eta(S) = \mathsf e(S) - 1$, then $S$ is \emph{complete intersection} and can be constructed via successive gluings (see~\cite{completeintersection}).  The following lower bound for the left-most column of each outlined region in Figure~\ref{fig:achieved} is a first step in the direction of Question~\ref{q:lowerbound}.  

\begin{prop}\label{p:completeintersection}
Any complete intersection numerical semigroup $S$ with $\mathsf m(S) = m$ and $\mathsf e(S) = e$ satisfies $m \ge 2^{e-1}$.  
\end{prop}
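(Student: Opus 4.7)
The plan is to prove the bound by induction on $e = \mathsf e(S)$, leveraging the structural characterization (referenced in the statement) that a complete intersection numerical semigroup is either $\ZZ_{\ge 0}$ or a gluing of two complete intersection numerical semigroups of strictly smaller embedding dimension. The base case $e = 1$ is immediate, since $S = \ZZ_{\ge 0}$ gives $\mathsf m(S) = 1 = 2^0$.

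For the inductive step, write $S = a_2 S_1 + a_1 S_2$, where $S_1$ and $S_2$ are complete intersection numerical semigroups with $\mathsf e(S_1) + \mathsf e(S_2) = e$, and where $a_i \in S_i$ is a non-atom chosen so that $\gcd(a_1, a_2) = 1$. The crucial observation is a uniform lower bound on non-atoms: any non-atom $n$ of a numerical semigroup $S'$ can be written as a sum of two or more atoms of $S'$, each at least $\mathsf m(S')$, so $n \ge 2\,\mathsf m(S')$. Applying this to $a_1 \in S_1$ and $a_2 \in S_2$, one obtains $a_1 \ge 2\,\mathsf m(S_1)$ and $a_2 \ge 2\,\mathsf m(S_2)$.

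The atoms of the gluing $S$ are the elements $a_2 n$ for $n$ an atom of $S_1$ together with the elements $a_1 n'$ for $n'$ an atom of $S_2$, so every atom of $S$ is bounded below by $\min(a_2\,\mathsf m(S_1),\, a_1\,\mathsf m(S_2)) \ge 2\,\mathsf m(S_1)\,\mathsf m(S_2)$. Invoking the inductive hypothesis on both $S_1$ and $S_2$ then yields
$$\mathsf m(S) \;\ge\; 2 \,\mathsf m(S_1)\, \mathsf m(S_2) \;\ge\; 2 \cdot 2^{\mathsf e(S_1)-1} \cdot 2^{\mathsf e(S_2)-1} \;=\; 2^{\mathsf e(S_1)+\mathsf e(S_2)-1} \;=\; 2^{e-1},$$
which is the claimed bound.

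I do not anticipate any serious obstacle: the argument is a clean induction once the non-atom bound $a_i \ge 2\,\mathsf m(S_i)$ is in hand. The point most worth double-checking is the recursive decomposition itself, namely that every complete intersection numerical semigroup other than $\ZZ_{\ge 0}$ genuinely arises as a gluing of two complete intersection semigroups of strictly smaller embedding dimension; this is standard and is the content of the result cited in the statement, so it can be used as a black box.
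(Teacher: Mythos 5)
Your proof is correct and takes essentially the same approach as the paper: induction on $e$ using the gluing decomposition of a complete intersection, together with the observation that the gluing elements, being non-atoms, satisfy $a_i \ge 2\,\mathsf m(S_i)$, so that the smallest generator of the gluing is at least $2\,\mathsf m(S_1)\,\mathsf m(S_2) \ge 2^{e-1}$.
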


\begin{proof}
We proceed by induction on $e$.  If $e = 1$, then $S = \ZZ_{\ge 0}$, so suppose $e \ge 2$.  
If $S$ is complete intersection with $\mathsf m(S) = m$ and $\mathsf e(S) = e$, then $S = aT + a'T'$ for some complete intersection $T$ and $T'$ with $\mathsf e(T) + \mathsf e(T') = e$.  This means
$$m = \min(a \mathsf m(T), a' \mathsf m(T')) \ge \min(2^{\mathsf e(T')} 2^{\mathsf e(T)-1}, 2^{\mathsf e(T)} 2^{\mathsf e(T)-1}) = 2^{e-1}$$
since $a' \ge 2\mathsf m(T)$ and $a \ge 2\mathsf m(T')$ are non-atoms in $T$ and $T'$, respectively.  
% 
% Conversely, suppose $m \ge 2^{e-1}$.  In what follows, we will locate a complete intersection numerical semigroup $T$ with $\mathsf e(T) = e - 1$, $\mathsf m(T) = 2^{e-2}$, and $\max(\ZZ_{\ge 0} \setminus T) = 2^{e-1} - 1$, as then letting
% $S = pT + m\ZZ_{\ge 0}$ with $p > m$ prime will complete the proof.  
% 
% We construct $T$ by induction on $e$.  If $e = 2$, then $T = \<2,3\>$ meets the desired criteria.  If $e \ge 3$ and $T'$ is complete intersection with $\mathsf e(T') = e - 2$, $\mathsf m(T') = 2^{e-3}$ and $\max(\ZZ_{\ge 0} \setminus T') = 2^{e-2} - 1$, then 
% $$T = 2T' + (2^{e-1}+1)\ZZ_{\ge 0}$$
% clearly has the desired multiplicity and embedding dimension.  Moreover, for any $n > 2^{e-2} - 1 = \max(\ZZ_{\ge 0} \setminus T')$, we have $2n \in 2T' \subset T$ and 
% $$2n + 1 = 2(n - 2^{e-2}) + (2^{e-1}+1) \in T,$$
% so $\max(\ZZ_{\ge 0} \setminus T) = 2^{e-1} - 1$.  
\end{proof}

If one fixes $m$ and $e$, then the set of attainable values of $\eta$ is often an interval, but need not be in general, as the red outline on the right hand side of Figure~\ref{fig:achieved} indicates for $m = 13$ and $e = 7$.  In fact, $\eta = 25$ is only attained by such a numerical semigroup if its Kunz poset is, up to symmetry, one of the two depicted in Figure~\ref{fig:noninterval}.  Notice in particular there are 3 atoms for which the sum of any two lies in the Ap\'ery set.  On the other hand, $\eta = 24$ is not attained by any such numerical semigroup.  

\begin{question}\label{q:wheninterval}
When is the set of attainable values of $\eta$ an interval for fixed $e$ and $m$?  
\end{question}

\begin{figure}[t]
\begin{center}
\includegraphics[height=1.5in]{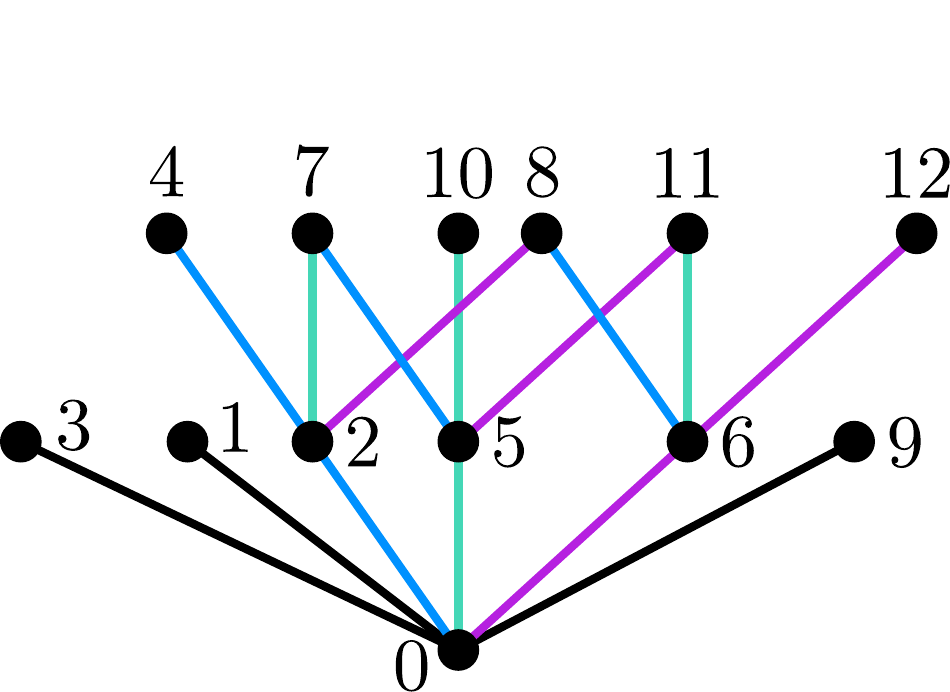}
\hspace{3em}
\includegraphics[height=1.5in]{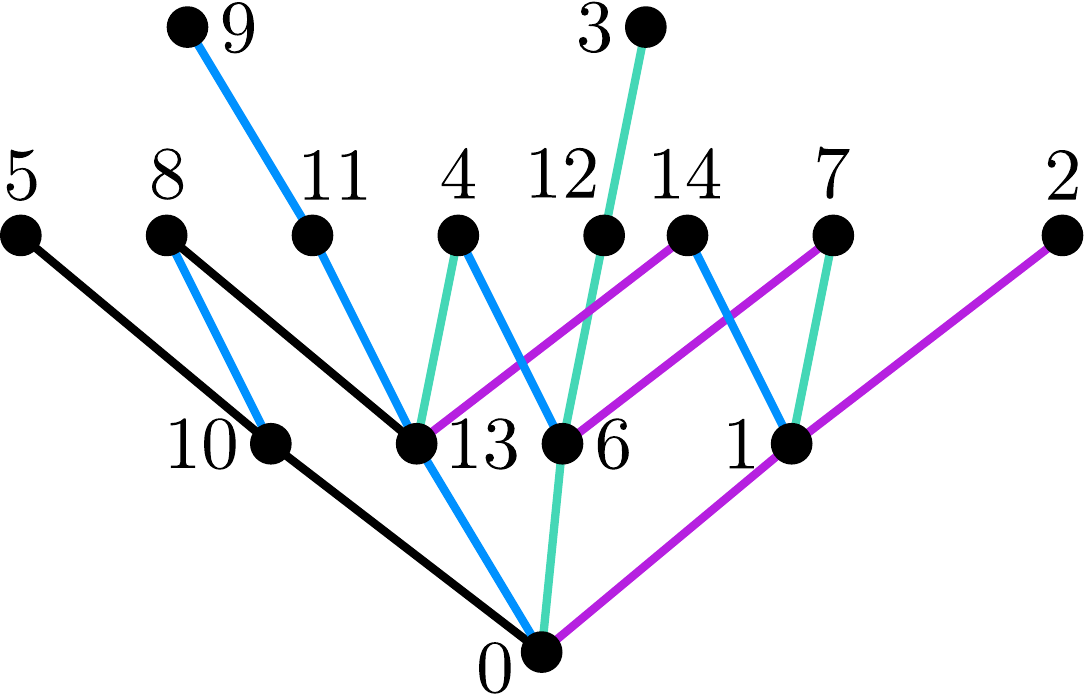}
\end{center}
\caption{Kunz posets of numerical semigroups with $m = 13$, $e = 7$, and $\eta = 25$ (left) and $m = 15$, $e = 5$, $\eta = 15$ (right).}
\label{fig:noninterval}
\end{figure}

Likewise, if one fixes $e$, then the set of values of $m$ for which a fixed $\eta$ can be attained need not be an interval.  Indeed, Figure~\ref{fig:achieved} indicates there exists a numerical semigroup with $e = 5$ and $\eta = 15$ for $m = 15$ (a sample Kunz poset is given in Figure~\ref{fig:noninterval}), but not for $m = 16$. Note that if Conjecture~\ref{conj:embdim4} is true, then for $e = 4$ the achievable values when fixing either $m$ or $\eta$ form an interval.

% We can use Theorem \ref{KieranTheorem} to show that, for any $e \geq 4$, $\eta \geq e-1$, for all but finitely many $m \in \mathbb{N}$ there exists a numerical semigroup with embedding dimension $e$, multiplicity $m$, and minimal presentation cardinaltiy $\eta$.

In this direction, Proposition~\ref{p:verticalinterval} implies that, for any $e \geq 4$ and $\eta \geq e-1$, there exists a numerical semigroup with embedding dimension $e$ and minimal presentation cardinality $\eta$ for all but finitely many multiplicities. In particular, there are at most finitely many such ``jumps'' in each column of Figure~\ref{fig:achieved}.

\begin{prop}\label{p:verticalinterval}
If $e \ge 4$ and $\eta \ge e - 1$, there is an $M \in \ZZ$ such that, for all $m \geq M$, there exists a numerical semigroup $S$ with $\mathsf e(S) = e$, $\eta(S) = \eta$, and $\mathsf m(S) = m$. 
\end{prop}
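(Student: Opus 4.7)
The plan is to argue by induction on $e \ge 4$, using $e = 4$ as the base case and bootstrapping one embedding dimension at a time via a gluing with $\ZZ_{\ge 0}$.

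For the base case $e = 4$, I would assemble three constructions already present in the paper. Proposition~\ref{p:eta3} produces a numerical semigroup with $\mathsf e(S) = 4$ and $\eta(S) = 3$ for every $m \ge 8$; Theorem~\ref{t:intervalfamily} applied with $e = 4$ and $s \in \{0,1,2\}$ produces examples with $\eta(S) \in \{4,5,6\}$ for every $m \ge 4+s$; and Theorem~\ref{t:embdim4} produces examples for every $\eta \ge 6$ as soon as $4m \ge (\eta-2)^2$. Together these cover every $\eta \ge 3 = e - 1$, with some threshold $M = M(\eta)$ in each case.

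For the inductive step, I assume the proposition holds in embedding dimension $e - 1 \ge 4$ and fix $\eta \ge e - 1$. Since $\eta - 1 \ge (e-1) - 1$, the inductive hypothesis supplies a single numerical semigroup $T$ with $\mathsf e(T) = e - 1$, $\eta(T) = \eta - 1$, and some fixed multiplicity $m_T \ge 2$; let $n^*$ and $F(T)$ denote its largest atom and Frobenius number, respectively. For any target $m > \max(n^*, F(T))$, the integer $m$ lies in $T$ (since $m > F(T)$) and is not an atom of $T$ (since $m > n^*$), so $m$ is a non-atom of $T$; meanwhile, for any prime $q > m$, the value $q \ge 2$ is a non-atom of $\ZZ_{\ge 0} = \<1\>$ and $\gcd(q, m) = 1$. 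Gluing $T$ and $\ZZ_{\ge 0}$ by $a = m$ and $a' = q$ produces $S = qT + m\ZZ_{\ge 0}$ satisfying
\[
\mathsf e(S) = \mathsf e(T) + 1 = e, \qquad \eta(S) = \eta(T) + 0 + 1 = \eta, \qquad \mathsf m(S) = \min(q \cdot m_T,\, m) = m,
\]
the last equality because $q > m$ and $m_T \ge 2$ together force $q \cdot m_T > m$. Thus the inductive threshold $M = \max(n^*, F(T)) + 1$ suffices.

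There is no real obstacle here: once the roles in the gluing formula are set up correctly (with $a = m$ the target multiplicity and $a' = q$ a large prime, rather than the reverse), the non-atom, coprimality, and smallest-generator conditions are all arranged uniformly in $m$, and a single $T$ furnished by the inductive hypothesis works for every sufficiently large $m$. The base case is where the actual work lies, and even there it is just an exercise in combining the three families supplied by Proposition~\ref{p:eta3} and Theorems~\ref{t:intervalfamily}--\ref{t:embdim4}. The resulting threshold $M$ is highly non-explicit, inheriting the quadratic dependence on $\eta$ from Theorem~\ref{t:embdim4} and accumulating Frobenius-number corrections at each level of the induction, but this is immaterial since only existence is claimed.
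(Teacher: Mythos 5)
Your proof is correct and follows essentially the same route as the paper: induction on $e$ with base case $e=4$, and an inductive step that glues the semigroup from the previous embedding dimension with $\ZZ_{\ge 0}$ so that the target $m$ becomes the new multiplicity (the paper uses $S = m\ZZ_{\ge 0} + (m+1)S'$ where you use a large prime $q$ in place of $m+1$, an immaterial difference). If anything, your treatment of the base case is more careful than the paper's, which cites only Theorem~\ref{t:embdim4} (covering $\eta \ge 6$) and leaves the cases $\eta \in \{3,4,5\}$ to Proposition~\ref{p:eta3} and Theorem~\ref{t:intervalfamily} implicitly.
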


\begin{proof}
The claim holds for $e = 4$ by Theorem~\ref{t:embdim4}.  As such, suppose $e \ge 5$ and $\eta \ge e - 1$.  By induction on $e$, we may suppose $S'$ is a numerical semigroup with $\mathsf e(S') = e - 1$ and $\eta(S') = \eta - 1$.  Let $f = \max(\ZZ_{\geq0} \setminus S)$ and let $M = \mathsf m(S') + f$.  Then for any $m \ge M$, the numerical semigroup $S = m\ZZ_{\ge 0} + (m+1)S'$ is a gluing of $S'$ with $\ZZ_{\ge 0}$ that has $\mathsf e(S) = e$, $\eta(S) = \eta(S') + 1 = \eta$, and $\mathsf m(S) = m$.  
\end{proof}

% In particular, recall that even if for fixed $e$ and  $\eta$ there is a numerical semigroup with embedding dimension $e$, minimal presentation cardinality $\eta$, and multiplicity $m$, the same may not be true for multiplicity $m+1$. For example, embedding dimension 5, minimal presenation cardinality 15, and multiplicy 15 is achievable, but embedding dimension 5, minimal presenation cardinality 15, and multiplicy 16 is not. Thus, the minimal multiplicity as in Corollary \ref{KieranCor} is in general greater than the minimal achievable multiplicity. However, 

The bound $M$ obtained in the proof of Proposition~\ref{p:verticalinterval} is far from optimal.  For example, if $e = 5$ and $\eta = 10$ the minimal such bound is $M = 5$, but when applying the construction in the proof, one obtains $M = 126$. This raises the following question.  

\begin{question}\label{q:verticalinterval}
In terms of $e$ and $\eta$, what is the minimal value of $M$ in Proposition~\ref{p:verticalinterval}?
\end{question}

%%%%%%%%%%%%%%%%%%%%%%%%%%%%%%%%%%%%%%%%%%%%%%%%%%%%%%%%%%%%%%%%%%%%%%%%%
\section*{Acknowledgements}%%%%%%%%%%%%%%%%%%%%%%%%%%%%%%%%%%%%%%%%%%%%%%
% The authors would like to thank Pedro Garc\'ia-S\'anchez and Tara Gomes for several helpful conversations.  
Much of this work was completed during the 2021 PolyMath REU.  
The authors made use of the computer software packages~\cite{numericalsgpsgap,kunzposetsage,numericalsgpssage} throughout their work.  

%!TEX root = minprescard.tex

%%%%%%%%%%%%%%%%%%%%%%%%%%%%%%%%%%%%%%%%%%%%%%%%%%%%%%%%%%%%%%%%%%%%%%%%%

%%%%%%%%%%%%%%%%%%%%%%%%%%%%%%%%%%%%%%%%%%%%%%%%%%%%%%%%%%%%%%%%%%%%%%%%%%%%%%%%%%%%%%%%%%%%%%%%%%%%%%%%%%%%%%%%%%%%%%%%%%%%%%%%%%%%
\end{document}